\newdimen\CdotAxis
\newcommand*{\CdotAux}[3]{%
  {%
    \settoheight\CdotAxis{$#2\vcenter{}$}%
    \sbox0{%
      \raisebox\CdotAxis{%
        \scalebox{#1}{%
          \raisebox{-\CdotAxis}{%
            $\mathsurround=0pt #2#3$%
          }%
        }%
      }%
    }%
    \dp0=0pt %
    \sbox2{$#2\bullet$}%
    \ifdim\ht2<\ht0 %
      \ht0=\ht2 %
    \fi
    \sbox2{$\mathsurround=0pt #2#3$}%
    \hbox to \wd2{\hss\usebox{0}\hss}%
  }%
}
\newtheorem{theorem}{Theorem}[section]
\newtheorem{lemma}[theorem]{Lemma}
\newtheorem{corollary}[theorem]{Corollary}
\newtheorem{proposition}[theorem]{Proposition}
\theoremstyle{definition}
\theoremstyle{remark}
\def \cA {\mathcal{A}}
\def \cB {\mathcal{B}}
\def \cD {\mathcal{D}}
\def \cE {\mathcal{E}}
\def \cK {\mathcal{K}}
\def \cN {\mathcal{N}}
\def \cP {\mathcal{P}}
\def \cR {\mathcal{R}}
\def \cT {\mathcal{T}}
\def \cW {\mathcal{W}}
\def \a {\alpha}
\def \b {\beta}
\def \g {\gamma}
\def \d {\delta}
\def \e {\varepsilon}
\def \t {\theta}
\def \k {\kappa}
\def \s {\sigma}
\def \T {\Theta}
\def \dD {\mathbb{D}}
\def \N {\mathbb{N}}
\def \R {\mathbb{R}}
\def \lra {\longrightarrow}
\def \Ra {\Rightarrow}
\def \Lra {\Longrightarrow}
\def \Ot {(O_n)_{n\geq 0}}
\def \Ttl {(\T^{\ell}_n)_{n\geq 0}}
\def \Ttk {(\T^1_n)_{n\geq 0}}
\def \Ott {(O^{\t}_n)_{n\geq 0}}
\def \Otl {(O^{\ell}_n)_{n\geq 0}}
\def \Otk {(O^{K+1}_n)_{n\geq 0}}
\def \Ztt {(Z^{\t}_n)_{n\geq 0}}
\def \Ztl {(Z^{\ell}_n)_{n\geq 0}}
\def \Ztk {(Z^{K+1}_n)_{n\geq 0}}
\def \pml {\cP^m_{\ell +1}}
\def \zl {\{\, 0,\dots,\ell \,\}}
\def \zk {\{\, 0,\dots,K \,\}}
\def \zm {\{\, 0,\dots,m \,\}}
\def \ul {\{\, 1,\dots,\ell \,\}}
\def \um {\{\, 1,\dots,m \,\}}
\def \otex {o^\theta_{\text{exit}}}
\def \oten {o^\theta_{\text{enter}}}
\def \olen {o^\ell_{\text{enter}}}
\def \olex {o^\ell_{\text{exit}}}
\def \oken {o^{K+1}_{\text{enter}}}
\def \okex {o^{K+1}_{\text{exit}}}
\def \cW {{\mathcal W}^*}
\def \exa {e^{-a}}
\def\uro{\smash{{U}^{\!\!\!\!\raise5pt\hbox{$\scriptstyle o$}}}}
\newcommand*{\lcdot}{\raisebox{-0.25ex}{\scalebox{1.2}{$\cdot$}}}
\begin{document}

\begin{center}
\begin{LARGE}
The distribution of the quasispecies\\[-2 pt]
for the Wright--Fisher model\\[3 pt]
on the sharp peak landscape
\end{LARGE}

\begin{large}
Joseba Dalmau

\vspace{-12pt}
Universit\'e Paris Sud and ENS Paris

\vspace{4pt}
\today
\end{large}
\end{center}

\begin{abstract}
\noindent
We consider the classical Wright--Fisher model
with mutation and selection.
Mutations occur independently in each locus,
and selection is performed according to the sharp peak landscape.
In the asymptotic regime studied in~\cite{CerfWF},
a quasispecies is formed.
We find explicitly the distribution of this quasispecies,
which turns out to be the same distribution as for the Moran model.
\end{abstract}

\section{Introduction}
The concept of quasispecies first appeared in 1971,
in Manfred Eigen's celebrated paper~\cite{Eigen1}.
Eigen studied the evolution of a population of macromolecules,
subject to both selection and mutation effects.
The selection mechanism
is coded in a fitness landscape;
while many interesting landscapes might be considered,
some have been given more attention than others.
One of the most studied landscapes is the sharp peak landscape:
one particular sequence---the master sequence---replicates faster than the rest,
all the other sequences having the same replication rate.
A major discovery made by Eigen 
is the existence of an error threshold for the mutation rate
on the sharp peak landscape:
there is a critical mutation rate $q_c$ such that,
if $q>q_c$ then the population evolves towards a disordered state,
while if $q<q_c$ then the population evolves so as to form a quasispecies,
i.e., a population consisting of a positive concentration of the master sequence,
along with a cloud of mutants which highly resemble the master sequence.

Eigen's model is a deterministic model,
the population of macromolecules is considered to be infinite
and the evolution of the concentrations of the different genotypes
is driven by a system of differential equations.
Therefore, 
when trying to apply the concepts of error threshold and quasispecies
to other areas of biology (e.g. population genetics or virology),
Eigen's model is not particularly well suited;
a model for a finite population,
which incorporates stochastic effects,
is the most natural mathematical approach to the matter.

Several works have tackled the issue of creating
a finite and stochastic version of Eigen's model
\cite{AF}, \cite{DSS}, \cite{DSV}, \cite{Gillespie}, 
\cite{McCaskill}, \cite{Musso}, \cite{NS}, \cite{PEM}, \cite{SAA1}.
Some of these works have recovered the error threshold phenomenon 
in the case of finite populations:
Alves and Fontantari~\cite{AF} find a relation between the error threshold and
the population size by considering a finite version of Eigen's model on the 
sharp peak landscape.
Demetrius, Schuster and Sigmund~\cite{DSS} generalise the error threshold criteria
by modelling the evolution of a population via branching processes.
Nowak and Schuster~\cite{NS} also find the error threshold phenomenon in finite 
populations by making use of a birth and death chain.
Some other works have tried to prove the validity of 
Eigen's model in finite populations by designing algorithms that give
similar results to Eigen's theoretical calculations \cite{Gillespie},
while others have focused on proposing finite population models that
converge to Eigen's model in the infinite population limit \cite{DSV}, \cite{Musso}.

The Wright--Fisher model
is one of the most classical models in mathematical evolutionary theory,
it is also used to understand the evolution of DNA sequences
(see \cite{Durrett}).
In~\cite{CerfWF},
some counterparts of the results on Eigen's model were derived
in the context of the Wright--Fisher model.
The Wright--Fisher model describes 
the evolution of a population of $m$ chromosomes
of length $\ell$ over an alphabet with $\k$ letters.
Mutations occur independently at each locus with probability $q$.
The sharp peak landscape is considered:
the master sequence replicates at rate $\s>1$,
while all the other sequences replicate at rate 1.
The following asymptotic regime is studied:
$$\displaylines{
\ell\to +\infty\,,\qquad m\to +\infty\,,\qquad q\to 0\,,\cr
{\ell q} \to a\,,
\qquad\frac{m}{\ell}\to\alpha\,.}$$
In this asymptotic regime
the error threshold phenomenon present in Eigen's model is recovered,
in the form of a critical curve $\a\psi(a)=\ln\k$
in the parameter space $(a,\a)$.
If $\a\psi(a)<\ln\k$, then
the equilibrium population is totally random,
whereas a quasispecies is formed when
$\a\psi(a)>\ln\k$.
In the regime where a quasispecies is formed,
the concentration of the master sequence in the equilibrium population
is also found.
The aim of this paper is to continue with the study 
of the Wright--Fisher model in the above asymptotic regime
in order to find the distribution of the whole quasispecies.
It turns out that the resulting distribution 
is the same as the one found for the Moran model in~\cite{CD}.
Nevertheless,
the techniques we use to prove our result are very different from those of~\cite{CD}.
The study of the Moran model relied strongly on monotonicity arguments,
and the result was proved inductively.
The initial case and the inductive step boiled down to the study
of birth and death Markov chains,
for which explicit formulas could be found.
The 
Wright--Fisher model is a model with no overlapping generations,
for which this approach is no longer suitable.
In order to find a more robust approach,
we rely on the ideas developed by Freidlin and Wentzell
to investigate random perturbations of dynamical systems~\cite{FW},
as well as some techniques already used in~\cite{CerfWF}.
Our setting is essentially the same as the one in~\cite{CerfWF},
the biggest difference being that we work in several dimensions
instead of having one dimensional processes.
The main challenge is therefore to extend the arguments from~\cite{CerfWF}
to the multidimensional case.
This is achieved by replacing the monotonicity arguments employed in~\cite{CerfWF}
by uniform estimates.

We present the main result in the next section.
The rest of the paper is devoted to the proof.

\section{Main Result}
We present the main result of the article here.
We start by describing the Wright--Fisher model,
we state the result next,
and we give a sketch of the proof at the end of the section.

\subsection{The Wright--Fisher model}
Let $\cA$ be a finite alphabet
and let $\k$ be its cardinality.
Let $\ell,m\geq 1$.
Elements of $\cA^\ell$ represent the chromosome of an individual,
and we consider a population of $m$ such chromosomes.
Two main forces drive the evolution of the population:
selection and mutation.
The selection mechanism is controlled by a fitness function $A:\cA^\ell\to[0,+\infty[\,$.
We define a selection function $F:\cA^\ell\times(\cA^\ell)^m\to[0,1]$
by setting
$$
\forall u\in\cA^\ell\quad \forall x\in(\cA^\ell)^m\qquad
F(u,x)\,=\,\frac{A(u)\text{card}\{\,i:1\leq i\leq m,\, x(i)=u\,\}}{A(x(1))+\cdots+A(x(m))}\,.
$$
For a given population $x$,
the value $F(u,x)$
is the probability that the individual $u$ is chosen when sampling from $x$.
Throughout the replication process, 
mutations occur independently on each allele with probability $q\in\,]0,1-1/\k[\,$.
When a mutation occurs, the letter is replaced by a new letter,
chosen uniformly at random among the remaining $\k-1$ letters of the alphabet.
The mutation mechanism is encoded in a mutation matrix
$M(u,v)$, $u,v\in\cA^\ell$.
The analytical formula for the mutation matrix is as follows:
$$\forall u,v\in\cA^\ell\qquad
M(u,v)\,=\,
\prod_{j=1}^\ell \bigg(
(1-q)1_{u(j)=v(j)}+\frac{q}{\k-1}1_{u(j)\neq v(j)}
\bigg)\,.$$
We consider the classical Wright--Fisher model.
The transition mechanism from one generation to the next one is divided in two steps.
Firstly,
we sample with replacement $m$ chromosomes from the current population,
according to the selection function $F$ given above.
Secondly,
each of the sampled chromosomes mutates according to the law
given by the mutation matrix.
Finally,
the whole old generation is replaced with the new one,
so generations do not overlap.
For $n\geq 0$,
we denote by $X_n$
the population at time $n$,
or equivalently,
the $n$--th generation.
The Wright--Fisher model is the Markov chain $(X_n)_{n\geq 0}$
with state space $(\cA^\ell)^m$,
having the following transition matrix:
\begin{multline*}
\forall n\in\N\quad \forall x,y\in(\cA^\ell)^m\\
P(X_{n+1}=y\,|\, X_n=x)\,=\,
\prod_{i=1}^m\bigg(
\sum_{u\in\cA^\ell}F(u,x)M(u,y(i))
\bigg)\,.\hfil
\end{multline*}

\subsection{Main result}
We will work only with the sharp peak landscape:
there exists a sequence $w^*\in\cA^\ell$,
called master sequence,
whose fitness is $A(w^*)=\s>1$,
whereas for all $u\neq w^*$ in $\cA^\ell$
the fitness $A(u)$ is 1.
We introduce Hamming classes in the space $\cA^\ell$.
The Hamming distance between two chromosomes
$u,v\in\cA^\ell$ is defined as follows:
$$d_H(u,v)\,=\,\text{card}\lbrace\,
i\in\ul : u(i)\neq v(i)
\,\rbrace\,.$$
For $k\in\ul$ and a population $x\in(\cA^\ell)^m$, 
we denote by $N_k(x)$
the number of sequences in the population $x$
which are at distance $k$ from the master sequence, i.e.,
$$N_k(x)\,=\,\text{card}\lbrace\,
i\in\um: d_H(x(i),w^*)=k
\,\rbrace\,.$$
Let us denote by $I(p,t)$
the rate function governing the large deviations of a binomial law
of parameter $p\in[0,1]$:
$$\forall t\in [0,1]\qquad I(p,t)\,=\,
t\ln\frac{t}{p}+(1-t)\ln\frac{1-t}{1-p}\,.$$
We define, for $a\in\,]0,+\infty[\,$,
\begin{multline*}
\hfil\forall k\geq 0\qquad 
\rho^*_k\,=\,
(\s\exa-1)\frac{a^k}{k!}\sum_{i\geq 1}\frac{i^k}{\s^i}\,,\\
\rho^*(a)\,=\,\begin{cases}
\quad \rho^*_0 & \quad\text{if }\ \s\exa>1\\
\quad 0 & \quad\text{if }\ \s\exa\leq 1
\end{cases}\\
\psi(a)\,=\,\inf_{l\in\N}\inf\Bigg\lbrace\,
\sum_{k=1}^{l-1} I\bigg(
\frac{\s\rho_k}{(\s-1)\rho_k-1},\g_k
\bigg)
+\g_k I\bigg(
\exa,\frac{\rho_{k+1}}{\g_k}
\bigg):\\
\rho_0=\rho^*(a),\, 
\rho_l=0,\,
\rho_k,\g_k\in[0,1]\ \text{for}\ 0\leq k<l
\,\Bigg\rbrace\,.
\end{multline*}
\begin{theorem}\label{main}
We suppose that
$$\ell\to+\infty\,,\qquad
m\to+\infty\,,\qquad
q\to0\,,$$
in such a way that
$$\ell q\to a\in\,]0,+\infty[\,\,,\qquad
\frac{m}{\ell}\to\a\in[0,+\infty]\,.$$
We have the following dichotomy:

$\bullet$ if $\a\psi(a)<\ln\k$, then
$$\forall k\geq 0\qquad
\lim_{\genfrac{}{}{0pt}{1}{\ell,m\to\infty,\,q\to 0}{{\ell q} \to a,\,\frac{m}{\ell}\to\a}}\,\lim_{n\to\infty}\,
E\bigg(
\frac{N_k(X_n)}{m}
\bigg)\,=\,0\,,$$

$\bullet$ if $\a\psi(a)>\ln\k$, then
$$\forall k\geq 0\qquad
\lim_{\genfrac{}{}{0pt}{1}{\ell,m\to\infty,\,q\to 0}{{\ell q} \to a,\,\frac{m}{\ell}\to\a}}\,\lim_{n\to\infty}\,
E\bigg(
\frac{N_k(X_n)}{m}
\bigg)\,=\,\rho^*_k\,.$$

Moreover, in both cases,
$$\forall k\geq 0\qquad
\lim_{\genfrac{}{}{0pt}{1}{\ell,m\to\infty,\,q\to 0}{{\ell q} \to a,\,\frac{m}{\ell}\to\a}}\,\lim_{n\to\infty}\,
\text{Var}\bigg(
\frac{N_k(X_n)}{m}
\bigg)\,=\,0\,.$$
\end{theorem}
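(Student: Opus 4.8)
The plan is to follow the Freidlin--Wentzell strategy for random perturbations of dynamical systems, as outlined in the introduction, adapting the one-dimensional arguments of~\cite{CerfWF} to the process $\big(N_1(X_n),\dots,N_\ell(X_n)\big)$. The first step is to identify the relevant macroscopic dynamical system: after rescaling, the occupancy frequencies $\big(N_k(X_n)/m\big)_{k\geq 0}$ should, in the asymptotic regime, be driven by a deterministic map whose fixed points are $\rho^*=(\rho^*_k)_{k\geq 0}$ (the quasispecies) and $0$ (the disordered state). Here the Poissonization of the mutation kernel in the regime $\ell q\to a$ enters: the probability that a sequence at Hamming distance $j$ from $w^*$ produces, after mutation, a sequence at distance $k$ converges to a mixture of Poisson weights, which is exactly what produces the factor $a^k/k!$ and the geometric-type sums $\sum_i i^k/\s^i$ in the definition of $\rho^*_k$. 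I would first make this limit precise and show the sampling-plus-mutation step contracts towards $\rho^*$ when $\a\psi(a)>\ln\k$ and towards $0$ when $\a\psi(a)<\ln\k$.

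The second step is to control the stochastic fluctuations around this deterministic dynamics. Because the offspring are sampled i.i.d.\ given the current population, conditionally on $X_n$ the vector $\big(N_k(X_{n+1})\big)_k$ is multinomial with $m$ trials; hence by a union bound over the (polynomially many in $\ell$) Hamming classes and a binomial large-deviation estimate governed by the rate function $I(p,t)$, the one-step transition stays within an $\e$-neighbourhood of the deterministic image with probability $1-\exp(-c(\e)m)$. Iterating, the chain reaches a neighbourhood of the appropriate fixed point in $O(\ell)$ steps with overwhelming probability. This is where the function $\psi(a)$ is pinned down: the cost of the chain drifting away from the master-sequence region along a path $(\rho_k,\g_k)$ is precisely the sum of $I$-terms in the variational formula, and $\psi(a)$ is the cheapest such escape cost; the threshold $\a\psi(a)$ versus $\ln\k$ compares this escape cost against the entropy $\ln\k$ of the random background (the number of ways the non-master classes can be filled).

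The third step converts these transient estimates into statements about the invariant measure. Following Freidlin--Wentzell, I would show that the unique stationary distribution of $(X_n)$ concentrates, as $\ell,m\to\infty$, on a small neighbourhood of $\rho^*$ (resp.\ of $0$); since $0\leq N_k(X_n)/m\leq 1$, bounded convergence then yields the stated limits of $E\big(N_k(X_n)/m\big)$ after first sending $n\to\infty$ and then $\ell,m\to\infty$. The variance statement follows from the same concentration: $N_k(X_n)/m$ converges in probability under the stationary law to a constant, and boundedness upgrades this to $L^2$, so the variance vanishes in the limit.

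The main obstacle will be the second step in the regime $\a\psi(a)>\ln\k$: one must show not merely that the deterministic dynamics has $\rho^*$ as an attracting fixed point, but that the stochastic chain, started anywhere, actually climbs \emph{to} the master-sequence region rather than being trapped in the exponentially large disordered part of the state space. This requires the uniform (in the Hamming class, and in $\ell,m$) large-deviation estimates that replace the monotonicity/birth-and-death arguments available for the Moran model in~\cite{CD}; quantifying the competition between the selective advantage $\s$ pulling mass towards $w^*$ and the entropic repulsion of size $\ln\k$ is the crux, and is exactly what forces the variational expression for $\psi(a)$.
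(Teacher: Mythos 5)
Your proposal has a genuine gap, and in fact a conceptual error, at the point where the dichotomy is supposed to emerge. You assert that the deterministic sampling-plus-mutation map ``contracts towards $\rho^*$ when $\a\psi(a)>\ln\k$ and towards $0$ when $\a\psi(a)<\ln\k$''. This cannot be right: the limiting map (the function $F$ of the paper) depends only on $\s$ and $a$, not on $\a$, and whenever $\s e^{-a}>1$ it has the same two fixed points in both regimes, $0$ unstable and $\rho^*$ stable. The dichotomy is not a statement about which fixed point attracts; it is a comparison of two exponential time scales under the invariant measure: the persistence time near $\rho^*$ before the master-sequence class dies out, of order $\exp(m\psi(a))$, against the discovery time needed to recreate a master sequence from the disordered bulk, of order $\k^\ell$. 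Since $m/\ell\to\a$, the condition $\a\psi(a)\gtrless\ln\k$ decides which of the two dominates. Consequently your step 3 --- ``following Freidlin--Wentzell, show that the stationary distribution concentrates near $\rho^*$'' --- cannot be deduced from transient estimates over $O(\ell)$ steps with error $e^{-c(\e)m}$: the chain \emph{does} leave every neighbourhood of $\rho^*$ (it loses the master sequence after a time exponential in $m$) and then wanders for a time of order $\k^\ell$ in the disordered region. What is needed, and what the paper supplies, is a renewal decomposition of the invariant measure into excursions (Proposition~\ref{renewal}) together with matching estimates of $E(\tau_0)$ (persistence) and $E(\tau^*)$ (discovery); the concentration of Proposition~\ref{convchm} only controls the integrand during the persistence phase, and the conclusion follows because the discovery phase contributes a negligible fraction of time when $\a\psi(a)>\ln\k$.

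A second missing ingredient is the reduction to finitely many dimensions. You propose to run large-deviation estimates for the full vector $(N_0/m,\dots,N_\ell/m)$, whose dimension $\ell+1$ tends to infinity; uniformity of the rate $c(\e)$ and the compactness arguments needed later (extraction of convergent subsequences in the large-deviation upper bound, finite subcovers of the attracting basin as in Lemma~\ref{exc}) are then unavailable. The paper circumvents this by fixing $K$, projecting onto the classes $0,\dots,K$, and sandwiching the occupancy process between a lower and an upper process (the maps $\Psi^\ell_O$ and $\Psi^{K+1}_O$) for which this projection is Markovian; the price is that one only controls increasing functions of the cumulative sums $N_0+\cdots+N_K$, from which the individual limits $\rho^*_k$ and the variances are recovered by differencing. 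Your outline needs some substitute for this truncation-and-coupling step before any of the Freidlin--Wentzell machinery can be brought to bear. Your heuristic identification of $\psi(a)$ as an escape cost competing with the entropy $\ln\k$ is correct, but it sits uneasily with, and does not repair, the erroneous claim about the $\a$-dependence of the limiting dynamics.
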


\subsection{Sketch of proof}
The Wright--Fisher process $(X_n)_{n\geq 0}$
is hard to handle,
mainly due to the huge size of the state space
and the lack of a natural ordering in it.
Instead of directly working with the Wright--Fisher process,
we work with the occupancy process $\Ot$.
The occupancy process is
a simpler process which derives directly from the original process $(X_n)_{n\geq 0}$,
but only keeps the information we are interested in,
namely, the number of chromosomes in each of the 
$\ell+1$ Hamming classes.
The state space of the occupancy process is much simpler than
that of the Wright--Fisher process,
and it is endowed with a partial ordering.
The occupancy process will be the main subject of our study.

We fix next $K\geq 0$ and we focus on finding
the concentration of the individuals in the $K$--th Hamming class.
We compare the time that the occupancy process spends having
at least one individual in one of the Hamming classes $0,\dots,K$ (persistence time), with the time the process spends having no sequences in any of the classes $0,\dots,K$ (discovery time).
Asymptotically,
when $\a\psi(a)<\ln\k$,
the persistence time becomes negligible with respect to the discovery time,
whereas when $\a\psi(a)>\ln\k$,
it is the discovery time that becomes negligible with respect to the persistence time.
This fact, 
which already proves the first assertion of theorem~\ref{main},
is shown in~\cite{CerfWF} for the case $K=0$;
the more general case $K\geq 1$
is dealt with in the same way as the case $K=0$,
and the proof does not make any new contributions to the understanding of the model.
Therefore, 
we will admit this fact and focus on the interesting case $\a\psi(a)>\ln\k$.

We build a coupling to compare the occupancy process with some simpler processes,
which will only keep track of the dynamics of the Hamming classes $0,\dots,K$.
The simpler processes can be viewed as random perturbations of the same dynamical system.
The dynamical system has two fixed points:
an unstable one, 0,
and a stable one, $\rho^*=(\rho^*_0,\dots,\rho^*_K)$.
We use the theory developed by Freidlin and Wentzell~\cite{FW},
as well as some useful estimates from~\cite{CerfWF},
to show that the perturbed processes spend the greatest part of their time
very close to the stable fixed point $\rho^*$,
thus showing that the invariant measure of the perturbed processes
converge to the Dirac mass in $\rho^*$.

\subsection{The occupancy process}
The occupancy process
$\Ot$
will be the starting point of our study.
It is obtained from the original Wright--Fisher process
$(X_n)_{n\geq 0}$
by using a technique known as lumping
(section 4 of \cite{CerfWF}).
Let $\pml$ be the set of the ordered partitions 
of the integer $m$ in at most $\ell+1$ parts:
$$
\pml\,=\,
\big\lbrace\,
(o(0),\dots,o(\ell))\in\N^{\ell+1}:
o(0)+\cdots+o(\ell)=m
\,\big\rbrace\,.
$$
A partition $(o(0),\dots,o(\ell))$
is interpreted as an occupancy distribution,
which corresponds to a population with $o(l)$ 
individuals in the Hamming class $l$, for $0\leq l\leq \ell$.
The occupancy process $\Ot$
is a Markov chain with values in $\pml$
and transition matrix given by:
\begin{multline*}
\forall o,o'\in\pml\\
p_O(o,o')\,=\,
\prod_{0\leq h\leq\ell}\Bigg(
\frac{\sum_{k\in\zl}o(k)A_H(k)M_H(k,h)}{\sum_{h\in\zl}o(h)A_H(h)}
\Bigg)^{o'(h)}\,,\hfil
\end{multline*}
where $A_H$ is the lumped fitness function, 
defined as follows
$$\forall b \in \zl\qquad
A_H(b)\,=\,
\begin{cases}
\quad \s\quad & \text{if } b=0\,,\\
\quad 1\quad & \text{if } b\geq 1\,,
\end{cases}
$$
and $M_H$ is the lumped mutation matrix:
for $b,c\in\zl$ the coefficient $M_H(b,c)$ is given by
$$
\sum_{
\genfrac{}{}{0pt}{1}{0\leq k\leq\ell-b}{
\genfrac{}{}{0pt}{1}
 {0\leq l\leq b}{k-l=c-b}
}
}
{ \binom{\ell-b}{k}}
{\binom{b}{l}}
q^k
(1-q)^{\ell-b-k}
\left(\frac{q}{\kappa-1}\right)^l
\bigg(1-\frac{q}{\kappa-1}\bigg)^{b-l}\,.
$$
The state space $\pml$
of the occupancy process 
is endowed with a partial order.
Let $o,o'\in\pml$,
we say that $o$ is lower than or equal to $o'$,
and we write $o\preceq o'$, if
$$\forall l\in\zl\qquad
o(0)+\cdots+o(l)\,\leq\,o'(0)+\cdots+o'(l)\,.$$

\section{Stochastic bounds}\label{Stobounds}
In this section we build simpler processes
in order to bound stochastically the occupancy process $\Ot$.
We will couple the simpler processes with the original occupancy process
and we will compare their invariant probability measures.

\subsection{Lower and upper processes}
We begin by constructing a lower process $\Otl$
and an upper process $\Otk$
in order to bound stochastically the original occupancy process $\Ot$.
In other words,
the lower and upper processes will be built so that
for every occupancy distribution $o\in\pml$,
if the three processes start from~$o$,
then
$$\forall n\geq 0\qquad
O^\ell_n\,\preceq\,
O_n\,\preceq\,
O^{K+1}_n\,.$$
The new processes will have simpler dynamics
than the original occupancy process.

Let us describe loosely the dynamics of the lower process.
As long as there are no master sequences present in the population,
the lower process evolves exactly as the original occupancy process.
As soon as a master sequence appears,
all the chromosomes in the Hamming classes $K+1,\dots,\ell$
are directly sent to the class $\ell$.
Moreover,
as long as the master sequence remains present in the population,
all mutations towards the classes $K+1,\dots,\ell$
are also sent to the Hamming class $\ell$.
The dynamics of the upper process is similar,
this time with the Hamming class $\ell$
replaced by the class $K+1$.
The rest of the section is devoted to formalising this construction.

Let $\Psi_O$ be the coupling map
defined in section~5.1 of~\cite{CerfWF}.
We modify this map in order to obtain a lower map $\Psi^\ell_O$
and an upper map $\Psi^{K+1}_O$.
The coupling map $\Psi_O$ takes two arguments,
an occupancy distribution $o\in\pml$
and a matrix $r\in\cR$,
where $\cR$ is the set of matrices of size $m\times(\ell+1)$
with coefficients in $[0,1]$.
The Markov chain $\Ot$ is built with the help of the map $\Psi_O$
and a sequence $(R_n)_{n\geq 1}$
of independent random matrices with values in $\cR$,
the entrances of the same random matrix $R_n$
being independent and identically distributed,
with uniform law over the interval $[0,1]$.

Let us define two maps 
$\pi_\ell,\,\pi_{K+1}:\pml\to\pml$
by setting,
for every $o\in\pml$,
\begin{align*}
\pi_\ell(o)\,&=\,
\big(o(0),\dots,o(K),0,\dots,0,m-o\big((0)+\cdots+o(K)\big)\big)\,,\\
\pi_{K+1}(o)\,&=\,
\big(o(0),\dots,o(K),m-\big(o(0)+\dots+o(K)\big),0,\dots,0\big)\,.
\end{align*}
Obviously,
$$\forall o\in\pml\qquad
\pi_\ell(o)\,\preceq\,
o\,\preceq\,
\pi_{K+1}(o)\,.$$
We denote by $\cW$
the set of occupancy distributions
having at least one master sequence, i.e.,
$$\cW\,=\,
\lbrace\,
o\in\pml:o(0)\geq 1\,\rbrace\,,$$
and we denote by $\cN$
the set of occupancy distributions
having no master sequences, i.e.,
$$\cN\,=\,
\lbrace\,
o\in\pml: o(0)=0\,\rbrace\,.$$
Let us define 
$$\olen\,=\,(1,0,\dots,0,m-1)\,,\qquad
\olex\,=\,(0,\dots,0,m)\,.$$
The occupancy distributions 
$\olen$ and $\olex$
are the absolute minima of the sets
$\cW$ and $\cN$.
We define the lower map $\Psi^\ell_O$ by setting,
for $o\in\pml$ and $r\in\cR$,
$$\Psi^\ell_O(o,r)\,=\,
\begin{cases}
\ \Psi_O(o,r) &\ \text{ if }\ o\in\cN\ \text{ and }\ \Psi_O(o,r)\not\in\cW\,,\\
\ \olen &\ \text{ if }\ o\in\cN\ \text{ and }\ \Psi_O(o,r)\in\cW\,,\\
\ \pi_\ell\big(\Psi_O(\pi_\ell(o),r)\big) &
\ \text{ if }\ o\in\cW\ \text{ and }\ \Psi_O(\pi_\ell(o),r)\not\in\cN\,,\\
\ \olex &\ \text{ if }\ o\in\cW\ \text{ and }\ \Psi_O(\pi_\ell(o),r)\in\cN\,.
\end{cases}$$
Likewise,
we define the occupancy distributions 
$$\oken\,=\,(m,0,\dots,0)\,,\qquad
\okex\,=\,(0,m,0,\dots,0)\,,$$
which are the absolute maxima of the sets 
$\cW$ and $\cN$.
We define an upper map $\Psi^{K+1}_O$ by setting,
for $o\in\pml$ and $r\in\cR$,
 $$\Psi^{K+1}_O(o,r)=
\begin{cases}
\Psi_O(o,r) &\text{if }o\in\cN\text{ and }\Psi_O(o,r)\not\in\cW,\\
\oken &\text{if }o\in\cN\text{ and }\Psi_O(o,r)\in\cW,\\
\pi_{K+1}\big(\Psi_O(\pi_{K+1}(o),r)\big) &
\text{if }o\in\cW\text{ and }\Psi_O(\pi_{K+1}(o),r)\not\in\cN,\\
\okex &\text{if }o\in\cW\text{ and }\Psi_O(\pi_{K+1}(o),r)\in\cN.
\end{cases}$$
The coupling map $\Psi_O$ is monotone
---lemma~5.5 of~\cite{CerfWF}---
i.e., for every pair of occupancy distributions $o,o'$
and for every $r\in\cR$,
$$o\preceq o'
\,\Lra\,
\Psi_O(o,r)\preceq\Psi_O(o',r)\,.$$
We deduce that the lower map $\Psi^\ell_O$
is below the coupling map $\Psi_O$
and the upper map $\Psi^{K+1}_O$
is above the coupling map $\Psi_O$,
i.e.,
$$\forall o\in\pml\quad
\forall r\in\cR\qquad
\Psi^\ell(o,r)\,\preceq
\Psi_O(o,r)\,\preceq\,
\Psi^{K+1}_O(o,r)\,.$$
We use the lower and upper maps,
along with the i.i.d. sequence of random matrices $(R_n)_{n\geq 0}$,
in order to build a lower occupancy process $\Otl$
and an upper occupancy process $\Otk$.
Let $o\in\pml$ be the starting point of the processes.
We set $O^\ell_0=O^{K+1}_0=o$ and
$$\forall n\geq 1\qquad
O^\ell_n\,=\,\Psi^\ell(O^\ell_{n-1},R_n)\,,\qquad
O^{K+1}_n\,=\,\Psi^{K+1}(O^{K+1}_{n-1},R_n)\,.$$
\begin{proposition}\label{domio}
Suppose that the processes
$\Ot$, $\Otl$, $\Otk$
start all from the same occupancy distribution $o$.
We have
$$\forall n\geq 0\qquad
O^\ell_n\,\preceq\,
O_n\,\preceq\,
O^{K+1}_n\,.$$
\end{proposition}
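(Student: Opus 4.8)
The plan is to establish the two chains of inequalities simultaneously by induction on $n$, using only three ingredients that are already in place above: the sandwiching relation $\Psi^\ell_O(o,r)\preceq\Psi_O(o,r)\preceq\Psi^{K+1}_O(o,r)$, valid for every $o\in\pml$ and every $r\in\cR$; the monotonicity of the coupling map $\Psi_O$; and the transitivity of the partial order $\preceq$. The decisive structural point is that the three processes $\Ot$, $\Otl$, $\Otk$ are driven by the \emph{same} sequence of random matrices $(R_n)_{n\geq 1}$, so at each step one is entitled to compare the images of the three maps evaluated at (a priori distinct) occupancy distributions.

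For the base case $n=0$ the three processes share the starting point $o$, so $O^\ell_0=O_0=O^{K+1}_0=o$ and the inequalities hold trivially. Assume now $O^\ell_n\preceq O_n\preceq O^{K+1}_n$. For the lower bound at time $n+1$ I would write
$$O^\ell_{n+1}=\Psi^\ell_O(O^\ell_n,R_{n+1})\,\preceq\,\Psi_O(O^\ell_n,R_{n+1})\,\preceq\,\Psi_O(O_n,R_{n+1})=O_{n+1}\,,$$
where the first inequality is the sandwiching relation applied with $o=O^\ell_n$ and $r=R_{n+1}$, and the second is the monotonicity of $\Psi_O$ applied to the induction hypothesis $O^\ell_n\preceq O_n$. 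Symmetrically, for the upper bound,
$$O_{n+1}=\Psi_O(O_n,R_{n+1})\,\preceq\,\Psi_O(O^{K+1}_n,R_{n+1})\,\preceq\,\Psi^{K+1}_O(O^{K+1}_n,R_{n+1})=O^{K+1}_{n+1}\,,$$
using monotonicity of $\Psi_O$ together with $O_n\preceq O^{K+1}_n$ for the first inequality and the sandwiching relation for the second. Combining the two displays closes the induction and yields the claim for all $n\geq 0$.

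I do not expect a serious obstacle here: all the substantive work has gone into constructing the maps $\Psi^\ell_O$ and $\Psi^{K+1}_O$, checking the sandwiching relation, and importing the monotonicity of $\Psi_O$ from~\cite{CerfWF}. The only point worth a word of caution is that we neither know nor need that $\Psi^\ell_O$ or $\Psi^{K+1}_O$ are themselves monotone; the comparison propagates precisely because at every step the three distributions are funnelled through the single monotone map $\Psi_O$ before the one-sided modifications built into $\Psi^\ell_O$ and $\Psi^{K+1}_O$ take effect. Thus one could not, for instance, compare two trajectories of the lower process directly by the same token, but the argument as set up is exactly what the statement of the proposition requires.
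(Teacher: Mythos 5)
Your proof is correct and follows the same route the paper intends when it defers to Proposition~8.1 of \cite{CerfM}: an induction on $n$ that at each step combines the pointwise sandwiching $\Psi^\ell_O\preceq\Psi_O\preceq\Psi^{K+1}_O$ with the monotonicity of $\Psi_O$, both of which the paper has set up immediately beforehand precisely for this purpose. Your observation that $\Psi^\ell_O$ and $\Psi^{K+1}_O$ need not themselves be monotone is exactly the right caveat to make; the comparison propagates only because the inductive hypothesis is channeled through the single monotone map $\Psi_O$ at each step.
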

The proof is similar to the proof of proposition~8.1 in~\cite{CerfM}.

\subsection{Dynamics of the bounding processes}\label{Dynabound}
We study now the dynamics of the lower and upper processes in $\cW$.
Since the calculations are the same for both processes,
we take $\t$ to be either $K+1$ or $\ell$,
and we denote by $\Ott$ the corresponding process.
For the process $\Ott$,
the states in the set 
$$\cT^\t\,=\,
\lbrace\,
o\in\pml:
o(0)\geq 1
\text{ and }
o(0)+\cdots+o(K)+o(\t)<m
\,\rbrace\,,$$
are transient,
and the states in $\smash{\cN\cup(\cW\setminus\cT^\t)}$
form a recurrence class.
Let us take a look at the transition mechanism restricted to
$\smash{\cN\cup(\cW\setminus\cT^\t)}$.
Since
$$\cW\setminus\cT^\t\,=\,
\lbrace\,
o\in\pml:
o(0)\geq 1
\text{ and }
o(0)+\cdots+o(K)+o(\t)=m
\,\rbrace\,,$$
a state in $\smash{\cW\setminus\cT^\t}$
is totally determined 
by the occupancy numbers of the Hamming classes $0,\dots,K$;
whenever the process $\Ott$ 
starts form a state in $\smash{\cW\setminus\cT^\t}$,
the dynamics of
$
\big(
O^\t_n(0),\dots,O^\t_n(K)
\big)_{n\geq 0}$
is Markovian until the time of exit from $\smash{\cW\setminus\cT^\t}$.
Let us define the set 
$$\dD\,=\,
\lbrace\,
z\in\N^{K+1}:
z_0+\cdots+z_K\leq m
\,\rbrace\,.$$
We define the projection $\pi:\pml\to\dD$ 
by setting, for $o\in\pml$,
$$\pi(o)\,=\,(o(0),\dots,o(K))\,.$$
We denote by $\Ztt$ the Markov chain with state space
$\dD$
and transition matrix given by:
for $z,z'\in\dD$ and for any $n\geq 0$,
let $o$ be the unique element of
$\pml\setminus\cT^\t$
such that $\pi(o)=z$,

$\bullet$ if $z_0,z'_0\geq 1$,
$$
P(Z^\t_{n+1}=z'
\,|\,
Z^\t_n=z)\,=\,
P(\pi(O^\t_{n+1})=z'
\,|\,
O^\t_n=o)\,.$$

$\bullet$ if $z_0\geq 1$ and $z'_0=0$,
$$
P(Z^\t_{n+1}=z^\t_{\text{exit}}
\,|\,
Z^\t_n=z)\,=\,
\sum_{z':z'_0=0}
P(\pi(O^\t_{n+1})=z'
\,|\,
O^\t_n=o)\,,$$
where $z^\ell_{\text{exit}}=(0,\dots,0)$ and
$z^{K+1}_{\text{exit}}=(0,m,0,\dots,0)$.

$\bullet$ if $z=z^\t_{\text{exit}}$,
$$
P(Z^\t_{n+1}=z^\t_{\text{enter}}
\,|\,
Z^\t_n=z^\t_{\text{exit}})\,=\,
1\,,$$
where $z^\ell_{\text{enter}}=(1,0,\dots,0)$
and $z^{K+1}_{\text{enter}}=(m,0,\dots,0)$\,.

The remaining non--diagonal coefficients
of the transition matrix are null.
The diagonal coefficients are chosen so that the matrix is stochastic,
i.e., each row adds up to 1.
Let us denote by $p^\t(z,z')$ the above transition matrix
and let us compute its value 
for $z,z'\in\dD$ such that $z_0,z_0'\geq 1$.
We introduce some notation first.
For $d\geq 1$ and a vector $v\in\R^d$,
we denote by $|v|_1$ the $L^1$ norm of $v$:
$$|v|_1\,=\,|v_1|+\cdots+|v_d|\,.$$
For $d\geq 1$, a square matrix $M\in\R^{d^2}$,
and $i\in\lbrace\,1,\dots,d\,\rbrace$,
we denote by $M(i,\cdot)$ or $M_{i\lcdot}$ the $i$--th row of $M$,
and by $M(\cdot,i)$ or $M_{\lcdot i}$ the $i$--th column of $M$.
We also denote by $|M|_1$ the $L^1$ norm of $M$ in $\R^{d^2}$:
$$|M|_1\,=\,\sum_{i,j=1}^d |M_{ij}|\,.$$
We say that a vector $s\in\dD$ is compatible with another vector $z\in\dD$,
and we write $s\sim z$, if
$$z_i=0\,\Ra\,s_i=0\quad \text{for}\quad i\in\zk\
\qquad\text{and}\qquad 
|z|_1=m\,\Ra\,|s|_1=m\,.$$ 
We say that a matrix $b\in\N^{(K+1)^2}$
is compatible with the vectors $s,z'\in\dD$,
and we write $b\sim(s,z')$, if
$$\forall i\in\zk\qquad
|b(i,\cdot)|_1\,\leq\,s_i\qquad
\text{and}\qquad
|b(\cdot,i)|_1\,\leq\,z'_i\,.$$
Finally,
for $i\in\zk\cup\lbrace\t\rbrace$,
we define $M_H(i)$
to be the vector of $[0,1]^{K+1}$
given by
$$M_H(i)\,=\,\big(
M_H(i,0),\dots,M_H(i,K)
\big)\,.$$
Let $z,z'\in\dD$ such that $z_0,z'_0\geq 1$.
We now use the transition mechanism of $\Ott$
in order to compute the value of $p^\t(z,z')$:
$$p^\t(z,z')\,=\
\sum_{s\sim z}
\sum_{b\sim(s,z')}
p^\t(z,s,b,z')\,,$$
where $p^\t(z,s,b,z')$
is the probability that,
given $Z^\t_n=z$:

$\bullet$ for $i\in\zk$, $s_i$ individuals from the class $i$ are selected,
and $m-|s|_1$ individuals from the class $\t$ are selected.
The probability of this event is 
$$\frac{m!}{s_0!\cdots s_K!(m-|s|_1)!}\times
\frac{(\s z_0)^{s_0}z_1^{s_1}\cdots z_K^{s_K}(m-|z|_1)^{m-|s|_1}}
{((\s-1)z_0+m)^m}\,,$$
$\bullet$ for $i,j\in\zk$,
$b_{ij}$ individuals from the class $i$ mutate to the class $j$,
and $s_i-|b(i,\cdot)|_1$ individuals from the class $i$ mutate 
to the class $\t$.
For $i\in\zk$, the probability of this event is
$$\frac{s_i!}{b_{i0}!\cdots b_{iK}!(s_i-|b(i,\cdot)|_1)!}\times
M_H(i,0)^{b_{i0}}\cdots M_H(i,K)^{b_{iK}}(1-|M_H(i)|_1)^{s_i-|b(i,\cdot)|_1}\,,
$$
$\bullet$ for $j\in\zk$, 
$z'_j-|b(\cdot,j)|_1$ individuals from the class $\t$ mutate to the class $j$,
and $m-|s|_1-|z'|_1+|b|_1$ individuals from the class $\t$ do not mutate to any of the classes $\zk$.
The probability of this event is
\begin{multline*}
\frac{(m-|s|_1)!}
{(z'_0-|b(\cdot,0)|_1)!\cdots(z'_K-|b(\cdot,K)|_1)!(m-|s|_1-|z'|_1+|b|_1)!}
\\[4 pt]
\times M_H(\t,0)^{z'_0-|b(\cdot,0)|_1}\cdots M_H(\t,K)^{z'_K-|b(\cdot,K)|_1}
(1-|M_H(\t)|)^{m-|s|_1-|z'|_1+|b|_1}\,.
\end{multline*}
Finally,
\begin{multline*}
p^\t(z,s,b,z')\,=\,
\frac{m!}{s_0!\cdots s_K!(m-|s|_1)!}\times
\frac{(\s z_0)^{s_0}z_1^{s_1}\cdots z_K^{s_K}(m-|z|_1)^{m-|s|_1}}
{((\s-1)z_0+m)^m}\times\\
\prod_{i=0}^K 
\frac{s_i!}{b_{i0}!\cdots b_{iK}!(s_i-|b_{i\lcdot}|_1)!}\times
M_H(i,0)^{b_{i0}}\cdots M_H(i,K)^{b_{iK}}(1-|M_H(i)|_1)^{s_i-|b_{i\lcdot}|_1}\\
\times
\frac{(m-|s|_1)!}
{(z'_0-|b_{\lcdot 0}|_1)!\cdots(z'_K-|b_{\lcdot K}|_1)!(m-|s|_1-|z'|_1+|b|_1)!}
\\[4 pt]
\times M_H(\t,0)^{z'_0-|b_{\lcdot 0}|_1}\cdots M_H(\t,K)^{z'_K-|b_{\lcdot K}|_1}
(1-|M_H(\t)|_1)^{m-|s|_1-|z'|_1+|b|_1}\,.
\end{multline*}

\subsection{Bounds on the invariant measure}\label{Invmes}
Let us denote by
$\mu_O,\mu_O^\ell,\mu_O^{K+1}$
the invariant probability measures
of the processes
$\Ot,\Otl,\Otk$.
Let $\nu$
be the image measure of $\mu_O$
through the map
$$o\in\pml\longmapsto \frac{o(0)+\cdots+o(K)}{m}\,=\,
\frac{|\pi(o)|_1}{m}\in[0,1]\,.$$
For every function $g:[0,1]\mapsto\R$,
$$
\int_{[0,1]}g\,d\nu\,=\,
\int_{\pml} g\bigg(\frac{|\pi(o)|_1}{m}\bigg)\,d\mu_O\,=\,
\lim_{n\to\infty}E\bigg(
g\bigg(\frac{|\pi(O_n)|_1}{m}\bigg)
\bigg)\,.
$$
Let now $g:[0,1]\mapsto\R$
be an increasing function such that $g(0)=0$.
Thanks to proposition~\ref{domio},
the following inequalities hold:
for all $n\geq 0$,
$$
g\bigg(
\frac{|\pi(O_n^\ell)|_1}{m}
\bigg)\,
\leq\, g\bigg(
\frac{|\pi(O_n)|_1}{m}
\bigg)\,\leq\,
g\bigg(
\frac{|\pi(O_n^{K+1})|_1}{m}
\bigg)\,.
$$
Taking the expectation and sending $n$ to $\infty$
we deduce that
$$
\int_{\pml}g\bigg(
\frac{|\pi(o)|_1}{m}\bigg)
\,d\mu_O^\ell(o)
\,\leq\,\int_{[0,1]}g\,d\nu\,\leq\,
\int_{\pml}g\bigg(
\frac{|\pi(o)|_1}{m}\bigg)
\,d\mu_O^{K+1}(o)\,.
$$
Next,
we seek to estimate the above integrals.
The strategy is the same for the lower and upper integrals;
we set $\t$ to be either $K+1$ or $\ell$
and we study the invariant probability measure $\mu_O^\t$.
We will rely on the following renewal result.
Let $\cE$ be a finite set
and let $(X_n)_{n\geq 0}$
be an ergodic Markov chain with state space $\cE$
and invariant probability measure $\mu$.
Let $\cW$ be a subset of $\cE$
and let $e\in\cE$ be a state outside $\cW$.
We define
$$\tau^*\,=\,\inf\lbrace\,
n\geq 0 : X_n\in\cW
\,\rbrace\,,\qquad
\tau\,=\,\inf\lbrace\,
n\geq \tau^*: X_n=e
\,\rbrace\,.$$
\begin{proposition}\label{renewal}
For every function $f:\cE\mapsto\R$,
we have
$$\int_\cE f\,d\mu\,=\,
\frac{\displaystyle E\Bigg(
\sum_{n=0}^{\tau-1} f(X_n)
\,\Bigg|\,
X_0=e
\Bigg)}{E(\tau\,|\,X_0=e)}\,.$$
\end{proposition}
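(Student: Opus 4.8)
The plan is to use the standard renewal-reward (or cycle) decomposition of the stationary measure of an ergodic Markov chain. Since $\cE$ is finite and $(X_n)_{n\geq 0}$ is ergodic, it has a unique invariant probability measure $\mu$, and by the ergodic theorem the time averages $\frac{1}{N}\sum_{n=0}^{N-1} f(X_n)$ converge almost surely to $\int_\cE f\,d\mu$, regardless of the starting state. First I would fix the starting state $X_0=e$ and consider the successive return times to $e$: set $\tau_0=0$ and, for $j\geq 1$, let $\tau_j$ be the $j$-th time the chain, after having visited $\cW$ since the previous return, comes back to $e$. More precisely, $\tau_1=\tau$ as defined in the statement, and each subsequent $\tau_{j+1}-\tau_j$ is an independent copy (by the strong Markov property at $e$) of $\tau$ under $X_0=e$. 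The excursions $\bigl(X_{\tau_j},\dots,X_{\tau_{j+1}-1}\bigr)$ are therefore i.i.d.

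Next I would apply the law of large numbers to the i.i.d.\ sequence of cycle lengths $\tau_{j+1}-\tau_j$ and to the i.i.d.\ sequence of cycle rewards $\sum_{n=\tau_j}^{\tau_{j+1}-1} f(X_n)$. Writing $N_k=\tau_k$ and decomposing an arbitrary horizon $N$ into complete cycles plus a bounded remainder, one gets
$$
\frac{1}{N}\sum_{n=0}^{N-1} f(X_n)\,=\,
\frac{\frac{1}{k}\sum_{j=0}^{k-1}\sum_{n=\tau_j}^{\tau_{j+1}-1} f(X_n)}
{\frac{1}{k}\tau_k}\;+\;o(1)
\;\xrightarrow[N\to\infty]{}\;
\frac{E\bigl(\sum_{n=0}^{\tau-1} f(X_n)\,\big|\,X_0=e\bigr)}{E(\tau\,\big|\,X_0=e)}\,,
$$
where $k=k(N)$ is the number of completed cycles by time $N$. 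Comparing with the ergodic-theorem limit $\int_\cE f\,d\mu$ yields the claimed identity. One should first check that $E(\tau\,|\,X_0=e)<\infty$: since $\cE$ is finite and the chain is irreducible, hitting times of $\cW$ and return times to $e$ have exponential tails, so $\tau$ is integrable and the denominator is finite and nonzero; this also guarantees that $k(N)\to\infty$ and $k(N)/N\to 1/E(\tau\,|\,X_0=e)$ almost surely.

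The only genuinely delicate point is verifying that the excursions between consecutive $\tau_j$'s really are i.i.d.; this is where the definition of $\tau$ as a \emph{two-stage} stopping time ($\tau^*$ then the first return to $e$) matters. The subtlety is that after time $\tau$ the chain sits at $e\notin\cW$, so the "clock" resets cleanly: the post-$\tau$ process is again a copy of the original chain started at $e$, and the next $\tau^*$, $\tau$ are measurable functions of that fresh trajectory. Hence the strong Markov property applied successively at $\tau_1,\tau_2,\dots$ gives genuine independence and identical distribution of the cycles, with no overlap or leftover conditioning. Everything else — the LLM for i.i.d.\ sums and the renewal bookkeeping relating $N$ to $k(N)$ — is routine. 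I expect this identification of i.i.d.\ cycles, together with the finiteness of $E(\tau\,|\,X_0=e)$, to be the main thing to get right; the rest follows from the finite-state ergodic theorem.
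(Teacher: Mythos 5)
Your proof is correct, and the key point you flag — that because $X_\tau=e$ the strong Markov property at the two-stage stopping time $\tau$ yields genuinely i.i.d.\ excursions — is exactly the observation that makes the renewal-reward decomposition go through; the finiteness of $E(\tau\,|\,X_0=e)$ is automatic for an ergodic chain on a finite state space, as you say. The paper itself only cites a standard argument (proposition~9.2 of the Moran-model paper) without giving details, and the route usually taken in that literature is slightly different from yours: one defines the occupation measure $\nu(x)=E\big(\sum_{n=0}^{\tau-1}1_{\{X_n=x\}}\,\big|\,X_0=e\big)$, checks directly via the Markov property (or optional stopping) that $\nu P=\nu$, observes $\nu(\cE)=E(\tau\,|\,X_0=e)$, and then invokes uniqueness of the invariant probability to get $\mu=\nu/\nu(\cE)$, from which the identity for $\int f\,d\mu$ follows by linearity. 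Your approach instead leans on the a.s.\ ergodic theorem for the time averages plus the strong law of large numbers for the i.i.d.\ cycle lengths and cycle rewards; this buys a more probabilistic and perhaps more intuitive argument, at the cost of importing the Markov-chain ergodic theorem as a black box and having to handle the partial-cycle remainder term. The direct invariance computation is shorter, purely algebraic, and avoids any appeal to almost-sure limit theorems, which is likely why it is the one used in the cited reference. Both are standard and both are valid; just note the small typo where you wrote ``LLM'' for ``LLN''.
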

The proof is standard and similar to that of proposition~9.2 of \cite{CerfM}.
We apply the renewal result to the process $\Ott$
restricted to $\smash{\cN\cup(\cW\setminus\cT^\t)}$,
the set $\smash{\cW\setminus\cT^\t}$,
the occupancy distribution $\otex$
and the function $o\mapsto g\big(|\pi(o)|_1/m\big)$.
We set
$$\tau^*\,=\,\inf\lbrace\,
n\geq 0 : O^\t_n\in\cW
\,\rbrace\,,\qquad
\tau\,=\,\inf\lbrace\,
n\geq \tau^* : O^\t_n=\otex
\,\rbrace\,.$$
Applying the renewal theorem we get
$$
\int_{\pml}g\bigg(
\frac{|\pi(o)|_1}{m}
\bigg)\,d\mu_O^\t(o)\,=\,
\frac{\displaystyle
E\Bigg(
\sum_{n=0}^{\tau-1}
g\bigg(
\frac{|\pi(O^\t_n)|_1}{m}
\bigg)
\,\Bigg|\,
O^\t_0=\otex
\Bigg)}{E(\tau\,|\, O^\t_0=\otex)}\,.
$$
Whenever the process $\Ott$ is in $\smash{\cW\setminus\cT^\t}$,
the dynamics of the first $K+1$ Hamming classes,
$\smash{\big(
\pi(O^\t_n)
\big)_{n\geq 0}}$,
is that of the Markov chain $\Ztt$
defined at the end of the previous section.
Let us suppose that $\Ztt$ starts from $z^\t_{\text{enter}}\in\dD$,
where $z^\ell_{\text{enter}}=(1,0,\dots,0)$ and 
$z^{K+1}_{\text{enter}}=(m,0,\dots,0)$.
Let $\tau_0$ be the first time that $Z_n^\t(0)$ becomes null:
$$\tau_0\,=\,\inf\lbrace\,
n\geq 0 : Z_n^\t(0)=0
\,\rbrace\,.$$
Since the process $\Ott$ always enters the set $\smash{\cW\setminus\cT^\t}$
at the state $\oten$,
the law of $\tau_0$ is the same as the law of $\tau-\tau^*$
for the process $\Ott$ starting from $\otex$.
We conclude that the trajectories
$\big(
\pi(O^\t_n)
\big)_{\tau^*\leq n\leq \tau}$ 
and $\big( Z^\t_n \big)_{0\leq n\leq \tau_0}$
have the same law.
Therefore,
\begin{align*}
E(\tau-\tau^*
\,|\,
O^\t_0=\otex)\,&=\,
E(\tau_0\,|\, Z^\t_0=z^\t_{\text{enter}})\,,\\
E\Bigg(
\sum_{n=\tau^*}^{\tau-1}
g\bigg(
\frac{|\pi(O_n^\t)|_1}{m}
\bigg)
\,\Bigg|\,
O^\t_0=\otex
\Bigg)\,&=\,
E\Bigg(
\sum_{n=0}^{\tau_0-1}
g\bigg(
\frac{|Z_n^\t|_1}{m}
\bigg)
\,\Bigg|\,
Z^\t_0=z^\t_{\text{enter}}
\Bigg)\,.\hspace*{20 pt}
\end{align*}
Thus, 
we can rewrite the formula for the invariant probability measure $\mu_O^\t$ as follows:
\begin{multline*}
\int_{\pml}g\bigg(\frac{|\pi(o)|_1}{m}\bigg)
\,d\mu_O^\t(o)\,=\,
\frac{\displaystyle 
E\Bigg(
\sum_{n=0}^{\tau^*-1}
g\bigg(
\frac{|\pi(O^\t_n)|_1}{m}
\bigg)
\,\Bigg|\,
O^\t_0=\otex
\Bigg)}{E(\tau^*\,|\, O^\t_0=\otex)
+E(\tau_0\,|\, Z^\t_0=z^\t_{\text{enter}})}
\\
+\frac{\displaystyle
E\Bigg(
\sum_{n=0}^{\tau_0-1}
g\bigg(
\frac{|Z^\t_n|_1}{m}
\bigg)
\,\Bigg|\,
Z^\t_0=z^\t_{\text{enter}}
\Bigg)}{E(\tau^*\,|\, O^\t_0=\otex)
+E(\tau_0\,|\, Z^\t_0=z^\t_{\text{enter}})}\,.\hfil
\end{multline*}
The objective of the following sections
is to estimate each of the terms 
appearing in the right hand side of this formula.

\section{Replicating Markov chains}
We study now the Markov chains 
$\Ztl$ and $\Ztk$.
The computations are the same for both processes,
we take $\t$ to be either $K+1$ or $\ell$
and we study the Markov chain $\Ztt$.
We will carry out all of our estimates in the asymptotic regime
$$\ell\to +\infty\,,\qquad
m\to +\infty\,,\qquad
q\to 0\,,\qquad
\ell q\to a\in\,]0,+\infty[\,\,.$$
We will say that a property holds asymptotically,
if it holds for $\ell,m$ large enough,
$q$ small enough and $\ell q$ close enough to $a$.

\subsection{Large deviations for the transition matrix}\label{LDtrans}
We define the set
$\cD\subset \R^{K+1}$ 
by
$$\cD\,=\,
\big\lbrace\,
r\in\R^{K+1}:
r_0\geq 0,\dots,r_K\geq 0 \text{ and } r_0+\cdots+r_K\leq 1
\,\big\rbrace\,.$$
For
$p,t\in\cD$, 
we define the quantity
$I_K(p,t)$
as follows:
%
$$I_K(p,t)\,=\,
\sum_{k=0}^K t_k\ln\frac{t_k}{p_k}+
(1-|t|_1)
\ln\frac{1-|t|_1}{1-|p|_1}\,,$$
%
%
We make the convention that 
$a\ln(a/b)=0$ if $a=b=0$.
The function
$I_K(p,\cdot)$
is the rate function
governing the large deviations of a multinomial distribution
with parameters $n$ and $p_0,\dots,p_K,1-|p|_1$.
We have the following estimate for the multinomial coefficients:

\begin{lemma}
For all
$n\geq1$,
$N<n$
and
$i_0,\dots,i_N\in \lbrace 0,\dots,n\rbrace$
such that
$s=i_0+\cdots+i_N\leq n$,
we have
$$\Bigg|
\ln\frac{n!}{i_0!\cdots i_N! (n-s)!}
+\sum_{k=0}^N i_k\ln\frac{i_k}{n}
+(n-s)\ln\frac{n-s}{n}
\Bigg|\,\leq\,
(N+2)\ln n + 2N +3\,.$$
\end{lemma}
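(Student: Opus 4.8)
The plan is to apply the standard Stirling bounds to each factorial appearing in the multinomial coefficient and then track the error terms carefully. Recall that for every integer $j\geq 1$ one has $j\ln j - j \leq \ln j! \leq j\ln j - j + \ln j + 1$ (the lower bound is immediate from $\ln j! = \sum_{i=2}^{j}\ln i \geq \int_1^j \ln x\,dx$, and the upper bound from $\ln j! \leq \ln j + \int_1^j \ln x\,dx$), and for $j=0$ the quantity $\ln 0! = 0$ while our convention makes $0\ln 0 = 0$, so the two-sided bound $|\ln j! - (j\ln j - j)| \leq \ln j + 1$ holds for all $j\geq 0$ if we interpret $\ln 0 = 0$ in the error term, or more safely $|\ln j! - (j\ln j - j)| \leq \ln(j+1) + 1$. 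First I would write
$$
\ln\frac{n!}{i_0!\cdots i_N!(n-s)!}
= \ln n! - \sum_{k=0}^{N}\ln i_k! - \ln(n-s)!\,,
$$
substitute the approximation $\ln j! \approx j\ln j - j$ for each of the $N+2$ factorials, and observe that the linear terms cancel: $-n + \sum_k i_k + (n-s) = 0$ since $s = \sum_k i_k$. What remains after this substitution is exactly $n\ln n - \sum_k i_k\ln i_k - (n-s)\ln(n-s)$, which is the negative of the quantity $-\sum_k i_k\ln(i_k/n) - (n-s)\ln((n-s)/n)$ appearing in the statement; indeed $\sum_k i_k\ln(i_k/n) + (n-s)\ln((n-s)/n) = \sum_k i_k\ln i_k + (n-s)\ln(n-s) - (\sum_k i_k + n - s)\ln n = \sum_k i_k\ln i_k + (n-s)\ln(n-s) - n\ln n$. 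So the main term matches exactly.

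Next I would collect the error terms. Each of the $N+2$ applications of the Stirling bound contributes an error of absolute value at most $\ln(\text{argument}+1)+1$, and since every argument ($n$, the $i_k$, and $n-s$) is at most $n$, each such error is at most $\ln(n+1)+1 \leq \ln n + 2$ for $n\geq 1$ (using $\ln(n+1)\leq \ln n + 1$, valid for $n\geq 1$). Summing over the $N+2$ factorials gives a total error bounded by $(N+2)(\ln n + 2) = (N+2)\ln n + 2N + 4$. This is very close to the claimed bound $(N+2)\ln n + 2N + 3$; to recover the stated constant I would use the slightly sharper elementary bounds $\ln n! \leq n\ln n - n + \ln n + 1$ together with $\ln n! \geq n\ln n - n$ and note that for the numerator $n!$ and for any factorial whose argument equals $0$ or $1$ the error is smaller, so the bookkeeping can be tightened by one unit; alternatively one checks the small cases $n$ small directly and uses the asymptotically wasteful term for large $n$. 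In any case the precise constant is a routine matter of choosing the Stirling inequalities with the right additive constants.

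The step I expect to require the most care is \emph{not} conceptually hard but demands attention: making sure the one-sided Stirling inequalities are applied with the correct sign so that the errors genuinely add in absolute value rather than needing cancellation, and handling the degenerate cases where some $i_k = 0$ (so that $i_k\ln i_k = 0$ by convention and the corresponding factorial contributes $\ln 0! = 0$, consistent with the formula) or where $s = n$ (so $n - s = 0$). I would dispose of these by noting that omitting a zero argument from both the factorial product and the entropy sum changes neither side, so one may assume all arguments are $\geq 1$, at which point the clean bound $|\ln j! - j\ln j + j| \leq \ln j + 1$ applies uniformly. Once that reduction is in place, the estimate follows by the triangle inequality over the $N+2$ terms as above.
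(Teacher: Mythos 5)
Your overall strategy is the right one and is the standard route (the paper itself does not spell out a proof, deferring to Lemma~7.1 of~\cite{CerfWF}, which is the one--dimensional version of exactly this computation): apply $\ln j! = j\ln j - j + \epsilon(j)$ with $0\le\epsilon(j)\le\ln j+1$, observe that the linear terms cancel because $\sum_k i_k+(n-s)=n$, and identify the remaining main term with $-\sum_k i_k\ln(i_k/n)-(n-s)\ln((n-s)/n)$. That identity is verified correctly, as is the reduction disposing of the degenerate cases $i_k=0$ and $s=n$.

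There is, however, a concrete quantitative slip in the error bookkeeping. The multinomial coefficient involves $N+3$ factorials ($n!$, the $N+1$ factorials $i_0!,\dots,i_N!$, and $(n-s)!$), not $N+2$, so the naive triangle inequality with $|\epsilon(j)|\le\ln j+1$ yields at best $(N+3)\ln n+O(N)$, which for large $n$ is strictly weaker than the stated $(N+2)\ln n+2N+3$; this cannot be repaired by ``tightening by one unit'' in the additive constants, and the suggested patch that the numerator's error is smaller is false, since $\epsilon(n)=\ln n!-n\ln n+n\sim\tfrac12\ln(2\pi n)$ is itself of order $\ln n$. The clean fix uses precisely the sign structure you set aside: writing the quantity to be bounded as $E=\epsilon(n)-\sum_{k}\epsilon(i_k)-\epsilon(n-s)$ and using that every $\epsilon(j)\ge 0$, one gets the two one-sided bounds $E\le\epsilon(n)\le\ln n+1$ and $E\ge-(N+2)(\ln n+1)$, hence $|E|\le(N+2)\ln n+N+2\le(N+2)\ln n+2N+3$. (Alternatively, the sharper Stirling error $\epsilon(j)\le\tfrac12\ln j+1$ makes the naive summation work.) The gap is minor and immaterial for the paper's use of the lemma --- only the fact that the error $\Phi$ is $O(\ln m)$ with a constant depending on $K$ is ever used --- but as written your argument proves a weaker inequality than the one stated.
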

The proof is similar to that of lemma~7.1 of~\cite{CerfWF}.

We define a function
$f:\cD\to\cD$
by setting
$$\forall r\in\cD\qquad
f(r)\,=\,
\frac{1}{(\s-1)r_0+1}(\s r_0,r_1,\dots,r_K)\,.$$
We also define a function
$I_\ell:\cD\times\cD\times[0,1]^{(K+1)^2}\times\cD\to[0,+\infty]$
by setting, for $r,\xi,t\in\cD$ and $\b\in[0,1]^{(K+1)^2}$,
\begin{multline*}
I_\ell(r,\xi,\b,t)\,=\,
I_K(f(r),\xi)+
\sum_{k=0}^K \xi_k I_K\Big(
M_H(k),\xi_k^{-1}\b(k,\cdot)
\Big)\\+\,
(1-|\xi|_1)
I_K\Big(
M_H(\t),
(1-|\xi|_1)^{-1}
(t_0-|\b(\cdot,0)|_1,\dots,t_K-|\b(\cdot,K)|_1)
\Big)\,.
\end{multline*}
Thanks to the previous identities,
for all 
$z,z',s\in\dD$
and
${b\in\N^{(K+1)^2}}$,
we can express the logarithm of the transition probability
$p^\t(z,s,b,z')$ as follows:
\begin{multline*}
\ln p^\t(z,s,b,z')\,=\,
-mI_K\Big(f\Big(\frac{z}{m}\Big),\frac{s}{m}\Big)-
\sum_{k=0}^K s_k I_K\big(M_H(k),s_k^{-1}b(k,\cdot)\big)\\
-(m-|s|_1)I_K\Big( 
M_H(\t),
(m-|s|_1)^{-1}\big(z'_0-|b(\cdot,0)|_1,\dots,z'_K-|b(\cdot,K)|_1\big)
\Big)\\
+\Phi(z,s,b,z')\,=\,
-m I_\ell\bigg( \frac{z}{m},\frac{s}{m},\frac{b}{m},\frac{z'}{m} \bigg)
+\Phi(z,s,b,z')\,.
\end{multline*}
The error term
$\Phi(z,s,b,z')$
satisfies, for all $m\geq 1$,
$$\forall z,z',s\in\dD\quad \forall b\in\N^{(K+1)^2}\qquad
\big|\Phi(z,s,b,z')\big|\,\leq\, C(K)(\ln m+1)\,,$$
where
$C(K)$
is a constant that depends on $K$ but not on $m$.
In the asymptotic regime,
for all
$i,j\geq 0$,
$$M_H(i,j)\,\lra\,M_\infty(i,j)\,=\,\begin{cases}
\quad \exa\displaystyle\frac{a^{j-i}}{(j-i)!}& \quad \text{si }\, i\leq j\,,\\
\quad 0 & \quad \text{si }\, i>j\,.
\end{cases}$$
For
$k\in\zk
$, we set
$$M_\infty(k)\,=\,\big( M_\infty(k,0),\dots,M_\infty(k,K) \big)\,.$$
For 
$t\in \cD$, 
we call
$\cB(t)$
the subset of
$[0,1]^{(K+1)^2}$
of the upper triangular matrices
$\b$ 
such that
the sum of the columns of $\b$ is equal to the vector $t$,
i.e.,
\begin{multline*}
\hspace*{40 pt}\cB(t)\,=\,
\big\lbrace\,
\b\in[0,1]^{(K+1)^2}:
\b_{ij}=0 \text{ for } i>j \ \text{ and }\\
|\b(\cdot,k)|_1=t_k \text{ for } 0\leq k\leq K
\,\big\rbrace\,.\hspace*{40 pt}
\end{multline*}
In the asymptotic regime, for
$r,\xi,t\in\cD$
and
$\b\in [0,1]^{(K+1)^2}$,
we get
$$I_\ell(r,\xi,\b,t)\,\lra\,\begin{cases}
\quad I(r,\xi,\b,t)& \quad \text{if }\, \b\in\cB(t)\,,\\
\quad +\infty& \quad \text{otherwise,}
\end{cases}$$
where the function
$I(r,\xi,\b,t)$
is given by
$$I(r,\xi,\b,t)\,=\,
I_K(f(r),\xi)+
\sum_{k=0}^K \xi_k I_K(M_\infty(k),\xi_k^{-1}\b(k,\cdot))\,.$$
We define a function
$V_1:\cD\times\cD\to [0,\infty]$
by setting, for
$r,t\in\cD$,
$$V_1(r,t)\,=\,
\inf\big\lbrace\,
I(r,\xi,\b,t):
\xi\in\cD,\ \b\in\cB(t)
\,\big\rbrace\,.$$ 
For $r\in\R^{K+1}$,
we denote by $\lfloor r \rfloor$
the vector 
$\lfloor r\rfloor=(\lfloor r_0\rfloor,\dots,\lfloor r_K\rfloor)$.
\begin{proposition}\label{pgdtrans1}
The one step transition probabilities of the Markov chain  
$\Ztt$
verify the large deviations principle governed by
$V_1$:

$\bullet$ For any subset $U$ of $\cD$ and for any $\rho\in\cD$,
we have, for $n\geq 0$,
$$
-\inf\big\lbrace\,
V_1(\rho,t):t\in \uro
\,\big\rbrace\,
\leq\,\liminf_{\genfrac{}{}{0pt}{1}{\ell,m\to\infty,\,q\to 0}{{\ell q} \to a}}
\frac{1}{m}\ln P\big( Z^\t_{n+1}\in mU \,\big|\, Z^\t_n=\lfloor m\rho  \rfloor \big)\,.
$$

$\bullet$ For any subsets $U,U'$ of $\cD$,
we have, for $n\geq 0$,
\begin{multline*}
\hspace*{30 pt}\limsup_{\genfrac{}{}{0pt}{1}{\ell,m\to\infty,\,q\to 0}{{\ell q} \to a}}
\frac{1}{m}\ln \sup_{z\in mU} P\big( Z^\t_{n+1}\in mU' \,\big|\, Z^\t_n=z \big)
\\\leq\,
-\inf\big\lbrace\,
V_1(r,t):
r\in \overline{U},\,
t\in \overline{U}'
\,\big\rbrace\,.\hspace*{40 pt}
\end{multline*}
\end{proposition}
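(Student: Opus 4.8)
The plan is to derive the large deviations principle directly from the explicit formula for $\ln p^\t(z,s,b,z')$ established in Section~\ref{LDtrans}, together with the uniform error bound $|\Phi(z,s,b,z')|\le C(K)(\ln m+1)$. The one-step transition probability $P(Z^\t_{n+1}=z'\mid Z^\t_n=z)$ is obtained by summing $p^\t(z,s,b,z')$ over all intermediate selection vectors $s\sim z$ and all mutation matrices $b\sim(s,z')$; since the number of such pairs $(s,b)$ is polynomial in $m$ (bounded by $(m+1)^{(K+1)+(K+1)^2}$), taking logarithms and dividing by $m$ shows that the sum is dominated, up to an $o(1)$ term, by its largest summand. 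Concretely, $\frac1m\ln P(Z^\t_{n+1}=z'\mid Z^\t_n=z) = -\min_{s,b} I_\ell\!\big(\tfrac zm,\tfrac sm,\tfrac bm,\tfrac{z'}m\big) + O\!\big(\tfrac{\ln m}m\big)$, where the min is over compatible $s,b$. Passing to the asymptotic regime and using $I_\ell \to I$ on $\cB(t)$ and $+\infty$ otherwise, the minimization over $s=\xi m$ and $b=\b m$ collapses exactly to the definition of $V_1(z/m,z'/m)$.

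First I would treat the lower bound. Given $\rho\in\cD$ and $U\subseteq\cD$, pick $t\in U$ (interior-ish, via a density/continuity argument) with $V_1(\rho,t)$ close to the infimum, choose near-optimal $\xi,\b$ achieving $I(\rho,\xi,\b,t)$, and set $z'=\lfloor mt\rfloor$, $s=\lfloor m\xi\rfloor$, $b=\lfloor m\b\rfloor$ (with a small correction so that the compatibility and column-sum constraints hold exactly — this is where the $\lfloor\cdot\rfloor$ notation introduced just before the proposition is used). Then $P(Z^\t_{n+1}\in mU\mid Z^\t_n=\lfloor m\rho\rfloor) \ge p^\t(\lfloor m\rho\rfloor,s,b,z')$, and the explicit formula gives $\frac1m\ln$ of this $\ge -I_\ell(\cdots) - C(K)\tfrac{\ln m+1}{m}$; letting $\ell,m\to\infty$, $q\to0$, $\ell q\to a$ and using continuity of $I_K$ in its arguments (where finite) yields the claimed $\liminf$ bound.

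For the upper bound, for any $z\in mU$ and $z'\in mU'$ I would bound $P(Z^\t_{n+1}=z'\mid Z^\t_n=z) \le (\text{number of pairs }(s,b))\times \max_{s,b} p^\t(z,s,b,z') \le (m+1)^{C(K)} \exp\!\big(-m\,\min_{s,b} I_\ell(\tfrac zm,\tfrac sm,\tfrac bm,\tfrac{z'}m) + C(K)(\ln m+1)\big)$. Summing over the at-most-$(m+1)^{K+1}$ target states $z'\in mU'$ costs only another polynomial factor. Taking $\frac1m\ln\sup_{z\in mU}$, the polynomial factors vanish in the limit, and $\min_{s,b} I_\ell \to \inf\{I(r,\xi,\b,t): \b\in\cB(t)\} = V_1(r,t)$ for $r,t$ the limits of $z/m,z'/m$; since $z/m$ ranges over $\overline U$ and $z'/m$ over $\overline U'$ (closures appear because we take $\limsup$ and the constraint sets are closed), we obtain the stated upper bound. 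A mild lower semicontinuity argument on $V_1$ is needed to justify that the limit of the minima is bounded below by the infimum over the closures.

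The main obstacle is controlling the behaviour of $I_\ell$ near the boundary of its domain, i.e., where some coordinate $\xi_k$, $r_k$, or $t_k$ vanishes: the normalizing factors $\xi_k^{-1}\b(k,\cdot)$ and $(1-|\xi|_1)^{-1}(\cdots)$ become singular, and one must check that the convention $a\ln(a/b)=0$ for $a=b=0$ makes $I_\ell$ (hence $I$ and $V_1$) well-defined and lower semicontinuous there, and that the discrete near-optimizers in the lower-bound construction can always be chosen to respect the support constraints (a vanishing limiting coordinate forces the corresponding integer coordinate to be $0$, not merely small). This is exactly the kind of boundary bookkeeping that in the one-dimensional setting of~\cite{CerfWF} was handled by monotonicity; here it must be done by the uniform estimate on $\Phi$ plus careful case analysis on which coordinates are zero, and I expect this to be the most delicate part of the argument.
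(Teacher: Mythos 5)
Your proposal follows essentially the same route as the paper's proof: the explicit product formula for $p^\t(z,s,b,z')$ with the uniform error bound on $\Phi$, a polynomial count of the $(z',s,b)$ terms for the upper bound (with the boundary/degenerate cases handled by extracting convergent subsequences of the minimizers and noting that $I_\ell\to+\infty$ off $\cB(t)$, which plays the role of your lower-semicontinuity step), and the choice $z'=\lfloor mt\rfloor$, $s=\lfloor m\xi\rfloor$, $b=\lfloor m\b\rfloor$ with $t$ in the interior of $U$ for the lower bound. The argument is correct and matches the paper's.
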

\begin{proof}
We begin by showing the large deviations upper bound.
Let $U,U'$
be two subsets of
$\cD$
and take $z\in mU$.
For 
$n\geq 0$,
\begin{align*}
P\big(Z^\t_{n+1}\in mU'
\,|\, Z^\t_n=z\big)\,&=\,
\sum_{z'\in mU'\cap \dD} p^\t(z,z')\\
&=\,\sum_{z'\in mU'\cap\dD}\,
\sum_{s\sim z}\,
\sum_{b\sim(s,z')} p^\t(z,s,b,z')\,.
\end{align*}
Thanks to the estimates on $p^\t$,
we have,
for $m\geq 1$,
\begin{multline*}
\sup_{z\in mU}P\big(Z^\t_{n+1}\in mU'
\,|\, Z^\t_n=z\big)
\\\leq\,
(m+1)^{C(K)}\max\big\{\,
p^\t(z,s,b,z'):
z\in mU,\
s\sim z,\ 
z'\in mU',\ 
b\sim (s,z')
\,\big\}
\\\leq\,
(m+1)^{C(K)}\exp\bigg(
-m\min\bigg\lbrace\,
I_\ell\Big(
\frac{z}{m},
\frac{s}{m},
\frac{b}{m},
\frac{z}{m}
\Big):\,
\begin{matrix}
z\in mU,\
z'\in mU'\\
s\sim z,\
b\sim (s,z')
\end{matrix}
\,\bigg\rbrace
\bigg),
\end{multline*}
where
$C(K)$ 
is a constant depending on
$K$ 
but not on 
$m$.
For each
$m\geq 1$,
let
$z_m,s_m,z'_m\in\dD$,
$b_m\in\zm^{(K+1)^2}$ 
be four terms that realise the above minimum.
We observe next the expression
$$\limsup_{\genfrac{}{}{0pt}{1}{\ell,m\to\infty,\,q\to 0}{{\ell q} \to a}}
-I_\ell\bigg(
\frac{z_m}{m},
\frac{s_m}{m},
\frac{b_m}{m},
\frac{z'_m}{m}
\bigg)\,.$$
Since
$\cD$
and
$[0,1]^{(K+1)^2}$
are compact sets,
up to the extraction of a subsequence,
we can suppose that when
$m\to\infty$,
$$\frac{z_m}{m}\to\rho\in\overline{U}\,,\ \quad
\frac{s_m}{m}\to\xi\in\cD\,,\ \quad
\frac{b_m}{m}\to \b\in[0,1]^{(K+1)^2}\,,\ \quad
\frac{z'_m}{m}\to t\in\overline{U}'\,.$$
If 
$\b$ 
is not an upper triangular matrix,
or if, for some $j\in\zk$, 
$|\b(\cdot,j)|\neq t_j$,
the limit is $-\infty$.
Thus, the only case we need to take care of is when
$\b\in\cB(t)$.
In this case, we have
$$
\limsup_{\genfrac{}{}{0pt}{1}{\ell,m\to\infty,\,q\to 0}{{\ell q} \to a}} 
-I_\ell\Big(
\frac{z_m}{m},
\frac{s_m}{m},
\frac{b_m}{m},
\frac{z_m}{m}
\Big)
\,\leq\,
-I(\rho,\xi,\b,t)\,.
$$
Optimising with respect to $\rho,\xi,\b,t$,
we obtain the upper bound of the large deviations principle.

We show next the lower bound.
Let
$\xi,t\in\cD$
and
$\b\in\cB(t)$.
We have
\begin{multline*}
P\big(
Z^\t_{n+1}=\lfloor mt \rfloor 
\,\big|\,
Z^\t_n=\lfloor m\rho \rfloor
\big)
\,\geq\,
p^\t(\lfloor m\rho \rfloor,
\lfloor m\xi \rfloor,
\lfloor m\b \rfloor,
\lfloor mt \rfloor)
\\\geq\,
(m+1)^{-C(K)}
\exp\bigg(
-mI_\ell\bigg(
\frac{\lfloor m\rho \rfloor}{m},
\frac{\lfloor m\xi \rfloor}{m},
\frac{\lfloor m\b \rfloor}{m},
\frac{\lfloor mt \rfloor}{m}
\bigg)
\bigg)\,.
\end{multline*}
We take the logarithm and we send
$m,\ell$ to $\infty$ and $q$ to $0$.
We obtain then
$$
\liminf_{\genfrac{}{}{0pt}{1}{\ell,m\to\infty,\,q\to 0}{{\ell q} \to a}}
\frac{1}{m}\ln P\big(
Z^\t_{n+1}=\lfloor tm \rfloor
\,\big|\,
Z^\t_n=\lfloor \rho m \rfloor
\big)
\,\geq\,
-I(\rho,\xi,\b,t)\,.
$$
Moreover, if
$t\in\uro$,
for $m$ large enough,
$\lfloor tm \rfloor$ belongs to $mU$.
Therefore,
$$
\liminf_{\genfrac{}{}{0pt}{1}{\ell,m\to\infty,\,q\to 0}{{\ell q} \to a}}
\frac{1}{m}\ln P\big(
Z^\t_{n+1}\in mU
\,\big|\,
Z^\t_n=\lfloor m\rho \rfloor
\big)
\,\geq\,
-I(\rho,\xi,\b,t)\,.
$$
We optimise over 
$\xi,\b,t$ 
and we obtain the large deviations lower bound.
\end{proof}
A similar proof shows that the $l$--step transition probabilities of $\Ztt$
also satisfy a large deviations principle.
For $l\geq 1$,
we define a function 
$V_l$ on $\cD\times\cD$ as follows:
\begin{multline*}
V_l(r,t)
\,=\,
\inf\Big\lbrace\,
\sum_{k=0}^{l-1} I(\rho_k,\xi_k,\b_k,\rho_{k+1}):\\
\rho_0=r,\ \rho_l=t,\ 
\rho_k, \xi_k\in\cD,\ 
\b_k\in\cB(t)
\ \text{ for }\ 0\leq k<l
\,\Big\rbrace\,.
\end{multline*}
\begin{corollary}\label{pgdtransl}
For 
$l\geq 1$,
the $l$--step transition probabilities of
$(Z^\t_n)_{n\geq 0}$
satisfy the large deviations principle governed by
$V_l$:

$\bullet$ For any subset $U$ of $\cD$ and for any $\rho\in\cD$,
we have, for $n\geq 0$,
$$
-\inf\big\lbrace\,
V_l(\rho,t):t\in \uro
\,\big\rbrace\,
\,\leq\,\liminf_{\genfrac{}{}{0pt}{1}{\ell,m\to\infty,\,q\to 0}{{\ell q} \to a}}
\frac{1}{m}\ln P\big( Z^\t_{n+l}\in mU \,\big|\, Z^\t_n=\lfloor \rho m \rfloor \big)\,.\hfil
$$

$\bullet$ For any subsets $U,U'$ of $\cD$,
we have, for $n\geq 0$,
\begin{multline*}
\hspace*{30 pt}\limsup_{\genfrac{}{}{0pt}{1}{\ell,m\to\infty,\,q\to 0}{{\ell q} \to a}}
\frac{1}{m}\ln \sup_{z\in mU} P\big( Z^\t_{n+l}\in mU' \,\big|\, Z^\t_n=z \big)
\\
\hspace*{30 pt}\leq\,
-\inf\big\lbrace\,
V_l(r,t):
r\in \overline{U},\,
t\in \overline{U}'
\,\big\rbrace\,.\hspace*{40 pt}
\end{multline*}
\end{corollary}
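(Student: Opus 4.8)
The strategy is to bootstrap the one-step large deviations principle of Proposition~\ref{pgdtrans1} to $l$ steps by induction on $l$, using the Markov property to decompose an $l$-step transition through an intermediate point. The base case $l=1$ is exactly Proposition~\ref{pgdtrans1}, noting that $V_1$ as defined in Section~\ref{LDtrans} coincides with the $l=1$ instance of the rate function displayed before the corollary (indeed $V_1(r,t)=\inf\{I(r,\xi,\b,t):\xi\in\cD,\b\in\cB(t)\}$). For the inductive step, one writes, for the lower bound,
$$
P\big(Z^\t_{n+l}\in mU\,\big|\,Z^\t_n=\lfloor m\rho\rfloor\big)\,\geq\,
P\big(Z^\t_{n+l}\in mU\,\big|\,Z^\t_{n+l-1}=\lfloor m\rho_{l-1}\rfloor\big)\,
P\big(Z^\t_{n+l-1}=\lfloor m\rho_{l-1}\rfloor\,\big|\,Z^\t_n=\lfloor m\rho\rfloor\big)
$$
for any intermediate $\rho_{l-1}\in\cD$; taking $\tfrac1m\ln$, letting $\ell,m\to\infty$, $q\to0$, and applying the inductive hypothesis $V_{l-1}$ to the second factor and $V_1$ to the first, then optimising over $\rho_{l-1}$ and over the free parameters, yields the $V_l$ lower bound. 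This reproduces almost verbatim the argument in the proof of Proposition~\ref{pgdtrans1}, with the extra layer of optimisation over the intermediate coordinates $\rho_1,\dots,\rho_{l-1}$ that appear in the definition of $V_l$.

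For the upper bound one proceeds dually: for $z\in mU$,
$$
P\big(Z^\t_{n+l}\in mU'\,\big|\,Z^\t_n=z\big)\,=\,
\sum_{w\in\dD} P\big(Z^\t_{n+l}\in mU'\,\big|\,Z^\t_{n+1}=w\big)\,
P\big(Z^\t_{n+1}=w\,\big|\,Z^\t_n=z\big)\,,
$$
and one bounds the sum by $(\card\dD)$ times its largest term, i.e. by $(m+1)^{K+1}$ times a maximum over $w$. Splitting $w=\lfloor m\eta\rfloor$ with $\eta$ ranging over a suitably fine grid in $\cD$ (or simply over $\dD/m$), one applies the one-step upper bound of Proposition~\ref{pgdtrans1} to the second factor and the $(l-1)$-step inductive hypothesis to the first, then extracts a convergent subsequence $\eta_m\to\eta$ exactly as in the proof above; the polynomial prefactor $(m+1)^{K+1}$ disappears after $\tfrac1m\ln$, and optimising over $\overline U$, $\eta$, $\overline{U}'$ gives the $V_l$ upper bound. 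It is worth remarking that in $V_l$ the matrices $\b_k$ are all constrained to lie in $\cB(t)$ (not $\cB(\rho_{k+1})$); this is harmless because only the final coordinate $t=\rho_l$ enters the column-sum constraint that matters for the limit, and the intermediate $\b_k$ contribute only through $I(\rho_k,\xi_k,\b_k,\rho_{k+1})$, whose finiteness already forces $\b_k\in\cB(\rho_{k+1})$; one should simply record this compatibility remark rather than belabour it.

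The main obstacle is bookkeeping rather than conceptual: one must check that the chain of inequalities survives the simultaneous limits $\ell,m\to\infty$, $q\to0$, $\ell q\to a$ uniformly enough that the intermediate optimisation does not interact badly with the subsequence extraction. Concretely, in the upper bound the intermediate point $w=w_m$ realising the maximum depends on $m$, so one must pass to a subsequence along which $w_m/m$ converges before invoking the two halves of the inductive hypothesis; since $\cD$ is compact this is automatic, but one must make sure the inductive hypothesis is applied in its $\sup$-over-$mU$ form (which Proposition~\ref{pgdtrans1} and the inductive statement both provide) so that no further uniformity is needed. Apart from this, the proof is a routine iteration and, as the text indicates, is modelled on the proof of Proposition~\ref{pgdtrans1}.
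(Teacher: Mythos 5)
Your strategy---induction on $l$ via the Markov property and Chapman--Kolmogorov---is a valid way to bootstrap the one-step LDP, but it differs in structure from the proof the paper has in mind. The phrase ``a similar proof shows'' indicates a direct replay of the proof of Proposition~\ref{pgdtrans1}: expand the $l$-step transition probability as a sum over all intermediate occupation vectors $z_1,\dots,z_{l-1}$ and all intermediate $(s_k,b_k)$, observe that the number of terms grows only polynomially in $m$, bound the sum by this polynomial factor times the largest term, pass to a subsequence along which all the maximising parameters converge simultaneously, and identify the limit with $-\sum_{k}I(\rho_{k-1},\xi_k,\b_k,\rho_k)$, the limit being $-\infty$ unless each $\b_k\in\cB(\rho_k)$. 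That direct route is a touch cleaner because all the intermediate points are extracted at once, so the optimisation that produces $V_l$ never has to be split across two factors. By contrast, in your inductive upper bound, after extracting $w_m/m\to\eta$ you must apply the one-step upper bound with a shrinking neighbourhood $W_\epsilon$ of $\eta$ as the target set and the $(l-1)$-step inductive hypothesis with $W_\epsilon$ as the source set, which yields $-\inf\{V_1(r,\eta_1):r\in\overline{U},\ \eta_1\in\overline{W}_\epsilon\}-\inf\{V_{l-1}(\eta_2,t):\eta_2\in\overline{W}_\epsilon,\ t\in\overline{U}'\}$ with two a priori different points $\eta_1,\eta_2\in\overline{W}_\epsilon$; letting $\epsilon\to0$ and matching this against $\inf_{r,t}V_l(r,t)$ then requires lower semicontinuity of the rate functions. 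This step is routine but genuinely needed, and your claim that ``no further uniformity is needed'' undersells it. Both routes work; the direct one simply avoids the technicality.

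One genuine slip in your compatibility remark: you correctly read the displayed constraint ``$\b_k\in\cB(t)$'' in the definition of $V_l$ as ``$\b_k\in\cB(\rho_{k+1})$,'' but the justification you give is wrong. You write that the finiteness of $I(\rho_k,\xi_k,\b_k,\rho_{k+1})$ already forces $\b_k\in\cB(\rho_{k+1})$; however, $I(r,\xi,\b,t)$ does not depend on its last argument at all---only $r$, $\xi$, $\b$ appear on the right-hand side of the displayed formula---so its finiteness imposes no column-sum constraint on $\b$. The compatibility constraint actually comes from the limit of the finite-$m$ rate function $I_\ell$: it is $I_\ell(r,\xi,\b,t)$ that diverges to $+\infty$ when $\b\notin\cB(t)$, as stated just before the definition of $I$. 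The reading is right; the reason offered for it is not.
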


\subsection{Perturbed dynamical system}
We look next for the zeros of the rate function
$I(r,\xi,\b,t)$.
We see that 
$I(r,\xi,\b,t)=0$
if and only if
$\xi=f(r)$,
$\b\in\cB(t)$
and
$\b(k,\cdot)/\xi_k=M_\infty(k)$
for $0\leq k\leq K$.
We define a function
$F=(F_0,\dots,F_K):\cD\to\cD$
by setting, for $r\in\cD$ and $k\in\zk$,
$$F_k(r)\,=\,\sum_{i=0}^k f_i(r)\exa\frac{a^{k-i}}{(k-i)!}\,.$$
Replacing $f$ by its value in the above formula, we can rewrite,
for $0\leq k\leq K$,
$$F_k(r)\,=\,
\frac{\exa}{(\s-1)r_0+1}\bigg(
\frac{a^k}{k!}\s r_0+\sum_{i=1}^k\frac{a^{k-i}}{(k-i)!}r_i 
\bigg)\,.$$
The Markov chain
$\Ztt$
can be seen as a perturbation of the dynamical system
associated to the map $F$:
$$z^0\in\cD\,,\qquad \forall n\geq 1\quad z^n\,=\,F(z^{n-1})\,.$$
Let $\rho^*$ be the point of $\cD$ given by:
$$\forall k\in\zk\qquad 
\rho^*_k\,=\,(\s\exa-1)\frac{a^k}{k!}\sum_{i\geq 1}\frac{i^k}{\s^i}\,.$$
\begin{proposition}\label{convsd}
We have the following dichotomy:

$\bullet$ if $\s\exa\leq 1$,
the function $F$ has a single fixed point,
$0$,
and
$(z^n)_{n\geq 0}$
converges to $0$.

$\bullet$ if $\s\exa>1$,
the function $F$ has two fixed points,
$0$ and $\rho^*$.
If $z^0_0=0$, 
the sequence 
$(z^n)_{n\in\N}$
converges to 0,
whereas if $z^0_0>0$,
the sequence 
$(z^n)_{n\in\N}$
converges to $\rho^*$.
\end{proposition}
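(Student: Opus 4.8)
The first thing I would exploit is that the first coordinate of $F$ decouples from the others: since $f_0(r)$ depends on $r$ only through $r_0$, one has $F_0(r)=g(r_0)$ with
$$g(x)\,=\,\frac{\s\exa\,x}{(\s-1)x+1}\,,\qquad x\in[0,1]\,,$$
so that $(z^n_0)_{n\ge0}$ is itself the orbit of the one-dimensional dynamical system attached to $g$. I would first analyse $g$: it is increasing on $[0,1]$, it fixes $0$, and $g(x)/x=\s\exa/((\s-1)x+1)$ is strictly decreasing, so $g$ has a second fixed point in $\,]0,1[\,$ exactly when $\s\exa>1$, namely $(\s\exa-1)/(\s-1)$, which one recognises as $\rho^*_0$. (Equivalently, the substitution $y=1/x$ conjugates $g$ to the affine map $y\mapsto(y+\s-1)/(\s\exa)$, whose dynamics is immediate.) From this one reads off: if $\s\exa\le1$ then $g(x)<x$ on $\,]0,1]$, so $(z^n_0)$ is non-increasing and converges to $0$; if $\s\exa>1$ then $(z^n_0)$ is monotone and bounded, hence converges to a fixed point of $g$, namely $0$ when $z^0_0=0$ and $\rho^*_0$ when $z^0_0>0$. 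In every case $(z^n_0)\to\ell_0\in\{0,\rho^*_0\}$, with $\ell_0=\rho^*_0$ precisely in the regime $\s\exa>1$, $z^0_0>0$.

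Next I would handle the higher coordinates by induction on $k\in\{1,\dots,K\}$, assuming $(z^n_i)\to\ell_i$ for all $i<k$. Isolating the term $i=k$ in the expression for $F_k$, the recursion takes the form
$$z^n_k\,=\,\lambda_{n-1}\,z^{n-1}_k+b_{n-1}\,,\qquad \lambda_{n-1}=\frac{\exa}{(\s-1)z^{n-1}_0+1}\,,$$
where $b_{n-1}$ is a continuous function of $z^{n-1}_0,\dots,z^{n-1}_{k-1}$ which vanishes when all of these are $0$. By the first step and the inductive hypothesis, $\lambda_{n-1}\to\lambda_\infty=\exa/((\s-1)\ell_0+1)$ and $b_{n-1}\to b_\infty$. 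The decisive observation is that $\lambda_\infty<1$ in all cases: $\lambda_\infty=\exa$ if $\ell_0=0$, and $\lambda_\infty=1/\s$ if $\ell_0=\rho^*_0$, using $(\s-1)\rho^*_0+1=\s\exa$. An elementary lemma on non-autonomous linear recursions --- if $0\le\lambda_n\to\lambda<1$, $b_n\to b$ and $(u_n)$ is bounded, then $u_n=\lambda_{n-1}u_{n-1}+b_{n-1}$ forces $u_n\to b/(1-\lambda)$ --- then gives $z^n_k\to b_\infty/(1-\lambda_\infty)=:\ell_k$; boundedness is free since the iterates stay in the compact set $\cD$. This completes the induction and shows that $(z^n)$ converges to a fixed point of $F$.

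It then remains to identify the limit and to count the fixed points. When $\ell_0=0$, the limit $b_\infty$ is the value of a continuous function at $(\ell_0,\dots,\ell_{k-1})=0$, hence $b_\infty=0$; so $\ell_k=0$ for every $k$ and $(z^n)\to0$. When $\ell_0=\rho^*_0$, the limit vector $(\ell_0,\dots,\ell_K)$ is, by construction, the unique fixed point of $F$ with first coordinate $\rho^*_0$: there $\lambda_\infty=1/\s$, so the equations $r_k=F_k(r)$ read $r_k=\tfrac1\s r_k+(\text{affine expression in }r_1,\dots,r_{k-1})$ and determine $r_1,\dots,r_K$ successively. It therefore suffices to check $F(\rho^*)=\rho^*$ for the explicit $\rho^*$ of the statement. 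Writing $S_k=\sum_{i\ge1}i^k\s^{-i}$, so that $\rho^*_k=(\s\exa-1)\tfrac{a^k}{k!}S_k$ and $S_0=1/(\s-1)$, the identity $F_k(\rho^*)=\rho^*_k$ reduces --- after clearing the common factor $(\s\exa-1)a^k/k!$ and using $(\s-1)\rho^*_0+1=\s\exa$ --- to $\sum_{i=0}^k\binom{k}{i}S_i=\s S_k-1$, which in turn follows by expanding $(1+j)^k=\sum_{i=0}^k\binom{k}{i}j^i$, summing against $\s^{-j}$ over $j\ge1$, and re-indexing the resulting sum. For the count of fixed points: any fixed point $r$ of $F$ has $r_0$ a fixed point of $g$, hence $r_0\in\{0,\rho^*_0\}$; if $r_0=0$ the relations $r_k=\exa r_k+(\text{affine in }r_1,\dots,r_{k-1})$ force $r=0$ by induction, and if $r_0=\rho^*_0$ (possible only when $\s\exa>1$) the triangular system above forces $r=\rho^*$. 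This yields exactly the fixed points $\{0\}$ when $\s\exa\le1$ and $\{0,\rho^*\}$ when $\s\exa>1$.

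The step I expect to be the main obstacle is the cascade in the second paragraph: one must make sure that at each coordinate the effective multiplier $\lambda_\infty$ is genuinely strictly below $1$ --- in particular in the critical regime $\ell_0=\rho^*_0$, where it equals $1/\s$ --- so that the non-autonomous linear recursion really contracts and the limits can be read off coordinate by coordinate. Everything else amounts to a finite triangular computation together with the elementary binomial identity recorded above.
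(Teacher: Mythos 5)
Your proof is correct, and its skeleton is the same as the paper's: exploit the triangular structure of $F$, settle the first coordinate by analysing the one-dimensional map $F_0$, then climb the coordinates by induction. The execution, however, is genuinely cleaner in two places. First, where the paper sandwiches $F_k(z^n_0,\dots,z^n_{k-1},\,\cdot\,)$ between two explicit affine maps $\underline{F},\overline{F}$ built with a fixed $\e$ and then lets $\e\to 0$, you isolate the autonomous structure $z^n_k=\lambda_{n-1}z^{n-1}_k+b_{n-1}$ directly and invoke a general lemma on non-autonomous linear recursions with $\lambda_n\to\lambda<1$; the decisive observation that $\lambda_\infty=\exa<1$ when $\ell_0=0$ and $\lambda_\infty=1/\s<1$ when $\ell_0=\rho^*_0$ (using $(\s-1)\rho^*_0+1=\s\exa$) is exactly the contraction the paper encodes in the requirement that the slopes of $\underline{F},\overline{F}$ be below~$1$. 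Second, the paper delegates the verification that $r_0=\rho^*_0$ forces $r_k=\rho^*_k$ to section~2.2 of~\cite{CD}, whereas you carry it out in closed form through the identity $\sum_{i=0}^k\binom{k}{i}S_i=\s S_k-1$ with $S_k=\sum_{j\geq1}j^k\s^{-j}$, obtained by expanding $(1+j)^k$ and re-indexing; this makes the fixed-point count self-contained. Both routes are sound; yours buys a more modular convergence argument and removes an external reference, at the minor cost of having to state and justify the recursion lemma.
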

\begin{proof}
For ${k\in\zk}$,
the function 
$F_k(r)$
is a function of $r_0,\dots,r_k$ only;
we can inductively solve the system of equations
$$F_k(r)\,=\,r_k\,,\qquad 0\leq k\leq K\,.$$
For $k=0$, we have
$$F_0(r)\,=\,\frac{\s\exa r_0}{(\s-1)r_0+1}\,.$$
The equation $F_0(r)=r_0$ has two solutions:
$r_0=0$ and $r_0=\rho^*_0$.
For $k$ in $\lbrace\, 1,\dots,K\,\rbrace$, we have
$F_k(r)=r_k$
if and only if
$$r_k\,=\,\frac{\exa}{(\s-1)r_0+1-\exa}
\bigg(
\frac{a^k}{k!}\s r_0+\sum_{i=1}^{k-1}\frac{a^{k-i}}{(k-i)!}r_i
\bigg)\,.$$
We end up with a recurrence relation.
If the initial condition is
$r_0=0$,
the only solution is
$r_k=0$ for all $k\in\zk$,
whereas if the initial condition is
$r_0=\rho^*_0$,
the only solution is
$r_k=\rho^*_k$ for all $k\in\zk$,
this last assertion is shown in section~2.2 of~\cite{CD}.

It remains to show the convergence.
We will show the convergence in the case
$\s\exa>1$, $z^0_0>0$.
The other cases are dealt with in a similar fashion, or are even simpler.
We will prove the convergence by induction on the coordinates.
Since the function
$$F_0(r)=\frac{\s\exa r_0}{(\s-1)r_0+1}$$
is increasing, concave, 
and satisfies $F_0(\rho^*)=\rho^*_0$,
the sequence
$(z^n_0)_{n\geq 0}$
is monotone and converges to 
$\rho^*_0$.
Let $k\in\lbrace\,1,\dots,K\,\rbrace$
and let us suppose that
the following limit holds: 
$$\lim_{n\to\infty}(z^n_0,\dots,z^n_{k-1})\,=\,(\rho^*_0,\dots,\rho^*_{k-1})\,.$$
Let $\e>0$.
We define two functions
$\underline{F},\overline{F}:[0,1]\to[0,1]$
by setting, for 
$\rho\in[0,1]$,
\begin{align*}
\underline{F}(\rho)\,=\,
\frac{\exa}{(\s-1)(\rho^*_0+\e)+1}\bigg(
\frac{a^k}{k!}\s(\rho^*_0-\e)
+\sum_{i=1}^{k-1}\frac{a^{k-i}}{(k-i)!}(\rho^*_i-\e)+\rho
\bigg)\,,\\
\overline{F}(\rho)\,=\,
\frac{\exa}{(\s-1)(\rho^*_0-\e)+1}\bigg(
\frac{a^k}{k!}\s(\rho^*_0+\e)
+\sum_{i=1}^{k-1}\frac{a^{k-i}}{(k-i)!}(\rho^*_i+\e)+\rho
\bigg)\,.
\end{align*}
By the induction hypothesis,
there exists $N\in\N$ such that for all $n\geq N$
and $i\in\lbrace\, 0,\dots,k-1\,\rbrace$,
$|z^n_i-\rho^*_i|<\e$.
We have then, for all $n\geq N$
and for all $\rho\in[0,1]$,
$$\underline{F}(\rho)\,\leq\,
F_k(z^n_0,\dots,z^n_{k-1},\rho)\,\leq\,
\overline{F}(\rho)\,.$$
We define two sequences,
$(\underline{z}^n)_{n\geq N}$
and 
$(\overline{z}^n)_{n\geq N}$,
by setting
${\underline{z}^N=\overline{z}^N=z^N_k}$
and for $n>N$
$$\underline{z}^n\,=\,\underline{F}(\underline{z}^{n-1})\,,\qquad
\overline{z}^n\,=\,\overline{F}(\overline{z}^{n-1})\,.$$
Thus, for all $n\geq N$, we have
$\underline{z}^n\leq
z^n_k\leq
\overline{z}^n$.
Since $\underline{F}(\rho)$ and 
$\overline{F}(\rho)$
are affine functions,
and for $\e$ small enough
their main coefficient is strictly smaller than $1$,
the sequences $(\underline{z}^n)_{n\geq N}$
and $(\overline{z}^n)_{n\geq N}$
converge to the fixed points of the functions
$\underline{F}$ et $\overline{F}$,
which are given by:
\begin{align*}
\underline{\rho}^*_k\,=\,
\frac{\exa}{(\s-1)(\rho^*_0+\e)+1-\exa}\bigg(
\frac{a^k}{k!}\s(\rho^*_0-\e)
+\sum_{i=1}^{k-1}\frac{a^{k-i}}{(k-i)!}(\rho^*_i-\e)
\bigg)\,,\\
\overline{\rho}^*_k\,=\,
\frac{\exa}{(\s-1)(\rho^*_0-\e)+1-\exa}\bigg(
\frac{a^k}{k!}\s(\rho^*_0+\e)
+\sum_{i=1}^{k-1}\frac{a^{k-i}}{(k-i)!}(\rho^*_i+\e)
\bigg)\,.
\end{align*}
We let $\e$ go to $0$ and we see that
$$\lim_{n\to\infty}z^n_k\,=\,
\frac{\exa}{(\s-1)\rho^*_0+1-\exa}\bigg(
\frac{a^k}{k!}\s\rho^*_0
+\sum_{i=1}^{k-1}\frac{a^{k-i}}{(k-i)!}\rho^*_i
\bigg)\,=\,\rho^*_k\,,$$
which finishes the inductive step.
\end{proof}

\subsection{Comparison with the master sequence}\label{Compms}
In section~\ref{Stobounds}, in order to build the bounding occupancy processes,
we have fixed an integer $K\geq 0$
and we have kept the relevant information 
about the dynamics of the occupancy numbers of the Hamming classes $0,\dots,K$.
Let us call $\Ttl$ and $\Ttk$ the lower and upper occupancy processes
that are obtained for $K=0$,
and let us call,
as before,
$\Otl$ and $\Otk$
the lower and upper occupancy processes corresponding to $K>0$.
Let us define the following stopping times:
$$\tau(\T^\ell)\,=\,\inf\lbrace\,
n\geq 0 : \T^\ell_n\in\cN
\,\rbrace\,,\qquad
\tau(O^{K+1})\,=\,\inf\lbrace\,
n\geq 0 : O^{K+1}_n\in\cN
\,\rbrace\,.$$
We have constructed the processes $\Ttl,\Ttk,\Otl,\Otk$ 
in such a way that
they are all coupled and the following relations hold:
if the four processes start from the same occupancy distribution
$o\in\cW$, then
\begin{align*}
\forall n\in \lbrace\, 0,\dots,\tau(\T^\ell)\,\rbrace\qquad
&\T^\ell_n\,\preceq\,
O^\ell_n\,\preceq\,
O^{K+1}_n\,\preceq\,
\T^{K+1}_n\,,\\
\forall n\in \lbrace\, 0,\dots, \tau(O^{K+1})\,\rbrace\qquad
& O^{K+1}_n\,\preceq\,
\T^{K+1}_n\,.
\end{align*}
These inequalities are naturally inherited 
by the Markov chains derived from the occupancy processes;
let $\Ztl$ and $\Ztk$
be the Markov chains associated to the processes 
$\Otl$ and $\Otk$, as in the end of section~\ref{Dynabound}.
Likewise,
let $(Y^\ell_n)_{n\geq 0}$ and $(Y^1_n)_{n\geq 0}$
be the Markov chains associated to the processes $\Ttl$ and $\Ttk$.
The state space of the Markov chains $\Ztl$, $\Ztk$ 
is the set $\dD$ defined in section~\ref{Dynabound},
whereas the state space for the Markov chains 
$(Y^\ell_n)_{n\geq 0}$, $(Y^1_n)_{n\geq 0}$
is $\zm$.
Let us define the following stopping times:
$$\tau(Y^\ell)\,=\,\inf\lbrace\,
n\geq 0: Y^\ell_n=0
\,\rbrace\,,\qquad
\tau(Z^{K+1})\,=\,\inf\lbrace\,
n\geq 0 : Z^{K+1}_n(0)=0
\,\rbrace\,.$$
Let $z\in\dD$ be such that $z_0\geq1$,
let the Markov chains $\Ztl$, $\Ztk$ start from $z$,
and let $z_0$ be the starting point of the Markov chains 
$(Y^\ell_n)_{n\geq 0}$, $(Y^1_n)_{n\geq 0}$.
The inequalities between the occupancy processes translate to the associated Markov chains as follows:
\begin{align*}
\forall n\in \lbrace\,0,\dots,\tau(Y^\ell)\,\rbrace\qquad
&Y^\ell_n\,\leq\, Z^\ell_n(0)\,\leq\,Z^{K+1}_n(0)\,\leq\,Y^1_n\,,\\
\forall n\in\lbrace\,0,\dots,\tau(Z^{K+1})\,\rbrace\qquad
& Z^{K+1}_n(0)\,\leq\,Y^1_n\,.
\end{align*}
The occupancy processes $\Ttl$, $\Ttk$,
along with the associated Markov chains
$(Y^\ell_n)_{n\geq 0}$, $(Y^1_n)_{n\geq 0}$,
have been studied in detail in~\cite{CerfWF}.
Thanks to the relations just stated,
we will be able to make use of many of the estimates derived in~\cite{CerfWF}.
Let $\t$ be $K+1$ or $\ell$
and let us call $\widetilde{V}$
the cost function associated to the Markov chain $(Y^\t_n)_{n\geq 0}$.
We will make use of the following results from~\cite{CerfWF}:

Let us define a function $\widetilde{F}:[0,1]\to[0,1]$ as follows:
$$\forall r\in[0,1]\qquad 
\widetilde{F}(r)=\exa\frac{\s r}{(\s-1)r+1}\,.$$ 
\begin{lemma}\label{costms}
Suppose that $\s\exa>1$.
For $s,t\in[0,1]$,
we have $\widetilde{V}(s,t)=0$ if and only if

$\bullet$ either $s=t=0$,

$\bullet$ or there exists $l\geq 1$ such that $t=\widetilde{F}^l(s)$,

$\bullet$ or $s\neq 0$, $t=\rho^*$.
\end{lemma}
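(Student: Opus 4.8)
The plan is to recognise Lemma~\ref{costms} as the one--dimensional instance ($K=0$) of the Freidlin--Wentzell description of the zero set of a quasi--potential. Recall that $\widetilde{V}$ is the cost function of the chain $(Y^\t_n)_{n\geq 0}$, so that $\widetilde{V}(s,t)=\inf_{l\geq 1}\widetilde{V}_l(s,t)$, where $\widetilde{V}_l(s,t)$ is the infimum of $\sum_{k=0}^{l-1}\widetilde{I}(\rho_k,\rho_{k+1})$ over all discrete paths $\rho_0=s,\dots,\rho_l=t$ in $[0,1]$, and $\widetilde{I}$ is the one--step rate function. Just as for the zeros of the rate function $I(r,\xi,\b,t)$ analysed above (the $K=0$ specialisation turns the map $F$ into $\widetilde{F}$), one has $\widetilde{I}(s,t)=0$ if and only if $t=\widetilde{F}(s)$; moreover $\widetilde{I}$ is lower semicontinuous, continuous at $(\rho^*,\rho^*)$, and $\widetilde{I}(0,t)=+\infty$ for every $t>0$, since in the asymptotic regime the creation of master sequences in a master--free population is super--exponentially costly (this is the source of the long discovery time in~\cite{CerfWF}). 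I would also use the one--dimensional case of Proposition~\ref{convsd}: when $\s\exa>1$, the map $\widetilde{F}$ is increasing and concave on $[0,1]$, its only fixed points are $0$ (repelling, $\widetilde{F}'(0)=\s\exa>1$) and $\rho^*$ (attracting, $\widetilde{F}'(\rho^*)=1/(\s\exa)<1$), and $\widetilde{F}^l(s)\to\rho^*$ monotonically for every $s\in(0,1]$.

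Sufficiency is then short. If $t=\widetilde{F}^l(s)$ for some $l\geq1$, the path $s,\widetilde{F}(s),\dots,\widetilde{F}^l(s)$ has zero cost, so $\widetilde{V}(s,t)=0$; this covers $s=t=0$ and $s=t=\rho^*$. If $s\neq0$ and $t=\rho^*$, I follow the zero--cost path for $l-1$ steps and jump to $\rho^*$ on the last step, obtaining $\widetilde{V}(s,\rho^*)\leq\widetilde{I}\big(\widetilde{F}^{l-1}(s),\rho^*\big)$; letting $l\to\infty$, using $\widetilde{F}^{l-1}(s)\to\rho^*$ and the continuity of $\widetilde{I}$ at $(\rho^*,\rho^*)$ (where it vanishes), gives $\widetilde{V}(s,\rho^*)=0$.

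For the converse, assume $\widetilde{V}(s,t)=0$ and pick paths $\omega_n$ of length $l_n$ from $s$ to $t$ with total cost $c_n\to0$. If $s=0$ and $t\neq0$, the first step of $\omega_n$ that leaves $0$ already costs $+\infty$, which is impossible; hence $t=0$. Suppose now $s\neq0$. If $\liminf_n l_n<\infty$, pass to a subsequence along which $l_n$ equals a fixed $l$; the set of length--$l$ paths with endpoints $s,t$ is compact and $\sum_k\widetilde{I}$ is lower semicontinuous, so a limit path has cost $\leq\liminf c_n=0$, which forces every step onto the graph of $\widetilde{F}$, that is $t=\widetilde{F}^l(s)$. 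If $l_n\to\infty$, fix $\delta>0$: by compactness and lower semicontinuity, $\widetilde{I}\geq c(\delta)>0$ off the $\delta$--neighbourhood of the graph of $\widetilde{F}$, so for $n$ large $\omega_n$ has no $\delta$--bad step, that is it is a $\delta$--pseudo--orbit of $\widetilde{F}$ starting at $s$. A shadowing estimate now confines it: the Lipschitz constant of $\widetilde{F}$ controls the accumulated error over the finitely many steps needed for $\widetilde{F}^k(s)$ to enter a neighbourhood of $\rho^*$ on which $\widetilde{F}$ is a contraction, after which the contraction traps the pseudo--orbit within $O(\delta)$ of $\rho^*$; hence every vertex of $\omega_n$ lies within $o_\delta(1)$ of the closed forward orbit $\{\widetilde{F}^k(s):k\geq1\}\cup\{\rho^*\}$. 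As $\delta\to0$ the fixed endpoint $t$ must therefore lie in this set, so $t=\widetilde{F}^l(s)$ for some $l\geq1$ or $t=\rho^*$. This exhausts the three alternatives of the lemma.

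The delicate step is the last one: in the divergent--length case of the converse, one has to rule out long excursions each of whose steps is nearly free yet whose costs add up to something positive, and this is exactly where the hyperbolicity of the two fixed points --- repulsion at $0$, contraction at $\rho^*$ --- is used, through the uniform shadowing/pseudo--orbit bound. Uniform estimates of precisely this kind for the one--dimensional chains $(Y^\t_n)_{n\geq0}$ are available in~\cite{CerfWF}, so in practice I would quote them; the remainder of the argument is a soft compactness--and--semicontinuity matter.
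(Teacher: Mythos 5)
The paper itself offers no proof of this lemma: it is stated as a result imported verbatim from~\cite{CerfWF} (alongside Proposition~\ref{pers}), so there is no in-paper argument to measure yours against, and a self-contained proof is genuinely additional content. Your argument is essentially correct and is the standard Freidlin--Wentzell analysis of the zero set of a one-dimensional quasi-potential: the zeros of the one-step rate are exactly the graph of $\widetilde{F}$, with $\widetilde{I}(0,t)=+\infty$ for $t>0$; sufficiency follows by running along the orbit and using continuity of $\widetilde{I}$ at $(\rho^*,\rho^*)$; necessity splits into bounded path lengths (compactness plus lower semicontinuity forces the limit path onto the orbit) and unbounded lengths (pseudo-orbit confinement). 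Two points need to be made explicit for the proof to be watertight. First, you implicitly use that $\widetilde{V}=\inf_{l\geq 1}\widetilde{V}_l$ ranges over paths of length at least one and does not count the chain's artificial exit/re-entry mechanism at the boundary state $0$; otherwise $\widetilde{V}(s,s)$ would vanish for every $s$ and the statement would be false, so this convention from~\cite{CerfWF} must be stated. Second, the shadowing step is the only genuinely delicate one and you only sketch it: since $\widetilde{F}$ has derivative $\s\exa>1$ at $0$, naive error propagation diverges, and the correct splitting is the one you indicate --- a bounded initial segment with error at most $\delta L^{k_0}$, followed by the contraction basin of $\rho^*$ where the error is trapped at $\delta/(1-\lambda)$ --- together with the observation that for $\delta$ small compared with $\inf(\widetilde{F}-\mathrm{id})$ on the relevant compact interval the pseudo-orbit cannot drift down to $0$; this last point is what excludes $t=0$ when $s\neq 0$, i.e.\ the positivity $\widetilde{V}(s,0)>0$ underlying Proposition~\ref{pers}, and should not be left implicit. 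With those details filled in, your proof is complete and could stand in place of the citation.
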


Let $\tau(Y^\t)$ be the first time that the Markov chain $(Y^\t_n)_{n\geq 0}$
becomes null:
$$\tau(Y^\t)\,=\,\inf\lbrace\,
n\geq 0 : Y^\t_n=0
\,\rbrace\,.$$
\begin{proposition}\label{pers}
Let $a\in\,]0,+\infty[\,$ and let $i\in\um$.
The expected value of $\tau(Y^\t)$ starting from $i$
satisfies
$$\lim_{\genfrac{}{}{0pt}{1}{\ell,m\to\infty,\,q\to 0}{{\ell q} \to a}}\,
\frac{1}{m}\ln E(\tau(Y^\t)
\,|\, Y^\t_0=i)\,=\,\widetilde{V}(\rho^*_0,0)\,.$$
\end{proposition}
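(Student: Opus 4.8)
The statement asserts that the expected persistence time $E(\tau(Y^\t)\mid Y^\t_0=i)$ grows exponentially at rate $\widetilde V(\rho^*_0,0)$. This is precisely the kind of quantity controlled by Freidlin--Wentzell theory for the Markov chain $(Y^\t_n)_{n\geq 0}$ seen as a random perturbation of the dynamical system $r\mapsto\widetilde F(r)$. Since this is a one-dimensional chain, the whole result should be extractable from the estimates already developed in~\cite{CerfWF}; indeed the statement is attributed implicitly to that work. The plan is therefore to reduce to the standard exit-time asymptotics and to identify the exponential rate as the cost $\widetilde V(\rho^*_0,0)$ of the cheapest path from the stable fixed point $\rho^*_0$ to the boundary point $0$.

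\smallskip

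First I would recall, from the large deviations principle for the transition probabilities of $(Y^\t_n)$ governed by $\widetilde V$ (the one-dimensional analogue of Proposition~\ref{pgdtrans1} and Corollary~\ref{pgdtransl}), the two classical ingredients. The lower bound: starting from any $i\in\um$, by Lemma~\ref{costms} there is a zero-cost trajectory driving $Y^\t$ into a neighborhood of $\rho^*_0$ in finitely many steps; from there, an optimal fluctuation path realizing $\widetilde V(\rho^*_0,0)$ reaches $0$, but such a path has exponentially small probability $\exp(-m(\widetilde V(\rho^*_0,0)+o(1)))$, so the chain must typically make roughly $\exp(m(\widetilde V(\rho^*_0,0)-o(1)))$ attempts before hitting $0$; this gives $\liminf \frac1m\ln E(\tau(Y^\t))\geq \widetilde V(\rho^*_0,0)$. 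The upper bound: from any state the chain returns to a neighborhood of $\rho^*_0$ in bounded time with probability bounded below, and from a neighborhood of $\rho^*_0$ it has probability at least $\exp(-m(\widetilde V(\rho^*_0,0)+o(1)))$ of reaching $0$ within a fixed number of steps (by Corollary-type $l$-step estimates, choosing $l$ and the neighborhood appropriately); a renewal/geometric-trials argument then bounds $E(\tau(Y^\t))$ from above by $\exp(m(\widetilde V(\rho^*_0,0)+o(1)))$.

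\smallskip

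The two bounds have to be made uniform as $\ell,m\to\infty$, $q\to 0$, $\ell q\to a$: this is why the rate function $\widetilde V$ appearing in the limit is the one associated with the limiting kernel $\widetilde F$ rather than with the prelimit chain. The key technical point is the continuity/lower semicontinuity of $\widetilde V(\cdot,\cdot)$ and the fact that $\widetilde V(\rho,0)$ is continuous in $\rho$ near $\rho^*_0$, so that the neighborhood of $\rho^*_0$ used in the argument can be shrunk to recover exactly $\widetilde V(\rho^*_0,0)$ in the limit. One also needs that $0$ and $\rho^*_0$ are the only relevant attractors of $\widetilde F$ on $[0,1]$ when $\s\exa>1$ (Proposition~\ref{convsd} in the one-dimensional specialization, or rather Lemma~\ref{costms}), so that no cheaper cycle structure interferes and the cost of escaping to $\{0\}$ is simply $\widetilde V(\rho^*_0,0)$.

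\smallskip

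The main obstacle is the upper bound: controlling $E(\tau(Y^\t))$ from above requires showing that, \emph{uniformly} over starting points and over the asymptotic parameters, the chain comes back close to $\rho^*_0$ quickly and does not wander near the boundary $0$ for atypically long without actually hitting it. This is handled by the standard cycling argument of Freidlin--Wentzell (partition the time axis into blocks of length $l$, on each block the chain either hits $0$ or returns to a fixed neighborhood of $\rho^*_0$, and the number of blocks needed is geometric with the stated exponential parameter), but making every estimate uniform in $(\ell,m,q)$ is exactly the kind of work already carried out in~\cite{CerfWF}; I would simply invoke it. I expect the proof in the paper to consist of citing the corresponding proposition of~\cite{CerfWF} essentially verbatim, since $(Y^\t_n)$ is one of the chains studied there.
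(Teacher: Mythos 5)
Your read of the situation is exactly right: the paper offers no proof of Proposition~\ref{pers} at all, presenting it (together with Lemma~\ref{costms}) explicitly as one of the ``results from~\cite{CerfWF}'' that will be imported, since the one-dimensional chains $(Y^\t_n)_{n\geq 0}$ are precisely the objects analysed there. Your Freidlin--Wentzell sketch of the lower bound (zero-cost descent to a neighbourhood of $\rho^*_0$, then geometrically many attempts of cost $\exp(-m\widetilde V(\rho^*_0,0))$) and of the upper bound (block decomposition with uniform return estimates) is a faithful outline of what underlies the cited result, so the proposal matches the paper's approach.
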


\subsection{Concentration near $\rho^*$}\label{Concen}
We show next that,
when $\s\exa>1$,
asymptotically,
the Markov chain $\Ztt$
concentrates in a neighbourhood of $\rho^*$.
Let us loosely describe the strategy we will follow.
The Markov chain $\Ztt$ is a perturbation
of the dynamical system associated to the map $F$.
The map $F$ has two fixed points:
0 and $\rho^*$.
The fixed point $0$ is unstable,
while $\rho^*$ is a stable fixed point.
The proof relies mainly on two different kind of estimates.
We estimate first the typical time the process $\Ztt$
needs to leave a neighbourhood of the region $\lbrace\,z\in\dD:z_0=0\,\rbrace$;
since the instability at $0$ concerns principally the dynamics of the master sequence,
we will be able to make use of the estimates developed in~$\cite{CerfWF}$
by means of the inequalities stated in section~\ref{Compms}.
We estimate then the time the process $\Ztt$ spends outside a neighbourhood of
the region $\lbrace\,z\in\dD:z_0=0\,\rbrace$ and $\rho^*$.
Since $\Ztt$ tends to follow the discrete trajectories given by
the dynamical system associated to $F$,
it cannot stay a long time outside such a neighbourhood.
This fact will be proved with the help of the large deviations principle stated in the previous section.
This estimate will help us to bound the number of excursions
outside a neighbourhood of $\rho^*$, as well as the length of these excursions.
We formalise these ideas in the rest of the section.
In order to simplify the notation,
from now on we omit the superscript $\t$ 
and we denote by $P_z$ and $E_z$ 
the probabilities and expectations for the Markov chain $(Z_n)_{n\geq 0}$ starting from $z\in\dD$.

Let us define
$$D_\d\,=\,
\big\lbrace\,
r\in\cD:
0<r_0<\d
\,\big\rbrace\,.$$

\begin{lemma}\label{logsing}
For all $\d>0$,
there exists $c>0$,
depending on $\d$,
such that, asymptotically,
for all
$z\in\dD$ such that $z_0\geq 1$,
we have
$$
P_{z}\big(Z_1(0)>0,\dots,
Z_{\lfloor c\ln m\rfloor-1}(0)>0,
Z_{\lfloor c\ln m\rfloor}\in m(\cD\setminus \overline{D_\d})
\big)
\,\geq\,
\frac{1}{m^{c\ln m}}\,.$$
\end{lemma}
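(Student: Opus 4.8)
The statement we want is a lower bound on the probability that, starting from any state $z$ with $z_0\geq1$, the chain $(Z_n)_{n\geq0}$ keeps a master sequence alive for $\lfloor c\ln m\rfloor$ steps and, at that time, has escaped the slab $\overline{D_\d}=\{r_0\leq\d\}$. The plan is to exhibit an \emph{explicit deterministic trajectory} that starts at $z/m$, stays in the region $\{r_0>0\}$, and leaves $\overline{D_\d}$ in a bounded (in terms of $\ln m$) number of steps, and then to invoke the large deviations lower bound of Proposition~\ref{pgdtrans1} (or rather its $l$-step version, Corollary~\ref{pgdtransl}) to say that the chain follows this trajectory with probability at least $\exp(-Cm\ln m)\geq m^{-c\ln m}$ for a suitable $c$.

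First I would analyse the dynamical system $z^n=F(z^{n-1})$ near the face $\{r_0=0\}$. Since $\s\exa>1$, the first coordinate evolves by $r_0\mapsto \s\exa r_0/((\s-1)r_0+1)$, whose derivative at $0$ equals $\s\exa>1$: the origin is repelling in the $r_0$-direction. Hence there is an integer $L$, depending only on $\d$ and on the parameters $a,\s$ (not on $m$), such that for \emph{every} starting value $\rho_0\in(0,1]$ with $\rho_0\geq 1/m$, the $L$-fold iterate $F^L$ satisfies $(F^L(\rho))_0>\d$ — in fact one can take $L=O(\ln m)$ to cover the worst case $\rho_0=1/m$, because each step multiplies a small $r_0$ by roughly $\s\exa>1$, so after $\asymp \ln m/\ln(\s\exa)$ steps $r_0$ exceeds $\d$. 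Along this trajectory all the intermediate values have $r_0>0$, so the event $\{Z_1(0)>0,\dots,Z_{L-1}(0)>0\}$ is exactly the statement that the chain stays near this path. Set $c$ larger than this $L/\ln m$ bound (more precisely, choose $c$ so that $\lfloor c\ln m\rfloor\geq L$; one can always pad the trajectory by staying near $\rho^*$, or near any point of $\cD\setminus\overline{D_\d}$, for the remaining steps).

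Next I would quantify the probability. By Corollary~\ref{pgdtransl} applied with $l=\lfloor c\ln m\rfloor$, the target set $U'$ a small ball around the endpoint of the trajectory inside $\cD\setminus\overline{D_\d}$, and the initial point $\lfloor m\rho\rfloor$ with $\rho=z/m$, we get
$$
\liminf \frac{1}{m}\ln P_z\big(Z_l\in mU'\big)\,\geq\,-V_l(\rho,t)\,\geq\,-\,l\cdot\sup I\,,
$$
where the supremum of the rate function $I$ along a trajectory with steps of size bounded away from $0$ is a finite constant $C_0$ depending only on $\d,a,\s$. Since $l=\lfloor c\ln m\rfloor$, this yields $P_z(Z_l\in mU')\geq \exp(-C_0 c m\ln m + o(m\ln m))$. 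This is not quite $m^{-c\ln m}=\exp(-c(\ln m)^2)$ — in fact it is much \emph{smaller} — so the inequality $P_z(\cdots)\geq m^{-c\ln m}$ holds comfortably for $m$ large, which is all we need; the asymptotic regime lets us absorb the $o(\cdot)$ terms and the uniformity over $z$ comes from the uniformity in the initial point built into the large deviations lower bound (the infimum there is over $t\in\uro$ for a \emph{fixed} $\rho$, but the bound on $V_l$ we use is uniform over $\rho_0\in[1/m,1]$ because the trajectory and the constant $C_0$ were chosen uniformly). The one subtlety is keeping the intermediate coordinates positive: since the deterministic trajectory has $r_0>0$ strictly at each of the steps $1,\dots,l-1$, a small-ball neighbourhood around it also has $r_0>0$, so the joint event $\{Z_1(0)>0,\dots,Z_{l-1}(0)>0, Z_l\in m(\cD\setminus\overline{D_\d})\}$ contains the tube-following event, whose probability is lower-bounded as above.

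\textbf{Main obstacle.} The delicate point is the \emph{uniformity in the starting state $z$}, in particular the case $z_0=1$ (i.e.\ $\rho_0=1/m\to0$): here the deterministic trajectory takes $\asymp\ln m$ steps to escape $\overline{D_\d}$, so $l$ genuinely grows with $m$, and I must check that the large deviations lower bound still applies with a cost that is $O(l)=O(\ln m)$ per unit of $m$ — equivalently that the per-step rate function $I(\rho_k,\xi_k,\b_k,\rho_{k+1})$ stays bounded along the whole escaping trajectory even while $\rho_{k,0}$ is tiny. This requires checking that $I_K(f(r),\xi)$ does not blow up as $r_0\to0$ when $\xi$ is the image $f(r)$ perturbed by a bounded factor, which is true because $I_K(p,t)$ is continuous and vanishes at $t=p$, and the ratios $t_k/p_k$ stay bounded along the chosen path. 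Handling this $r_0\to 0$ regime carefully — possibly by splitting the trajectory into an initial ``escape from $1/m$ to $\d/2$'' phase analysed via the inequalities of Section~\ref{Compms} comparing with the one-dimensional chain $(Y^\t_n)$, and a final ``push past $\d$'' phase of bounded length — is where the real work lies; the rest is a routine application of the large deviations machinery already set up.
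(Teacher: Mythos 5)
There is a genuine gap, and it sits at the heart of your quantitative step. After applying the large deviations lower bound over $l=\lfloor c\ln m\rfloor$ steps you obtain $P_z(\cdots)\geq\exp(-C_0\,c\,m\ln m+o(m\ln m))$, you correctly observe that this quantity is much \emph{smaller} than the target $m^{-c\ln m}=\exp(-c(\ln m)^2)$, and you then conclude that the desired inequality ``holds comfortably''. This is backwards: a lower bound on $P_z(\cdots)$ that is smaller than $m^{-c\ln m}$ gives no information about whether $P_z(\cdots)\geq m^{-c\ln m}$. More fundamentally, no argument based on the speed-$m$ large deviations principle of Proposition~\ref{pgdtrans1} can prove this lemma: even along a zero-cost trajectory the LDP only yields $P\geq\exp(-\e m)$ per step for any $\e>0$, hence $P\geq\exp(-\e m\ln m)$ over $\lfloor c\ln m\rfloor$ steps, and $\exp(-\e m\ln m)\ll\exp(-c(\ln m)^2)$ since $m\gg\ln m$. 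The bound $1/m^{c\ln m}$ lives at a much finer scale (a polynomial cost $m^{-c'}$ per step, not one exponential in $m$), precisely because in the worst case $z_0=1$ the relevant event during the first generations only concerns the offspring of $O(1)$ master sequences.

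The part of your proposal that does work is what you relegate to the ``main obstacle'' paragraph: the comparison with the one-dimensional master-sequence chain. The paper's proof consists entirely of that reduction. On the event $\{\tau(Y^\ell)>\lfloor c\ln m\rfloor\}$ one has $Z_n(0)\geq Y^\ell_n$ by the coupling of Section~\ref{Compms}, so the probability in question is bounded below by
$P_1\big(Y^\ell_1>0,\dots,Y^\ell_{\lfloor c\ln m\rfloor-1}>0,\,Y^\ell_{\lfloor c\ln m\rfloor}>m(\rho^*_0-\d)\big)$,
and lemma~7.7 of~\cite{CerfWF} bounds this one-dimensional probability from below by $1/m^{c\ln m}$ via a direct per-step estimate on the growth of the number of master sequences. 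If you wanted a self-contained multidimensional argument you would have to reprove that per-step estimate (at each generation the master-sequence count multiplies by a fixed factor $>1$ with probability at least $m^{-c'}$, uniformly in the current count), rather than invoke the large deviations machinery, which is simply too coarse at this scale.
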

\begin{proof}
Let $(Y_n^\ell)_{n\geq 0}$
be the Markov chain defined in section~\ref{Compms}.
Let $\tau(Y^\ell)$ be the first time that
the process $(Y_n^\ell)_{n\geq 0}$ becomes null:
$$\tau(Y^\ell)\,=\,\inf\big\lbrace\,
n\geq0:
Y_n=0
\,\big\rbrace\,.$$
By the remarks in section~\ref{Compms}
we can see that
\begin{multline*}
P_{z}\big(Z_1(0)>0,\dots,
Z_{\lfloor c\ln m\rfloor-1}(0)>0,
Z_{\lfloor c\ln m\rfloor}\in m(\cD\setminus \overline{D_\d})
\big)
\,\geq\\
P_{z}\big(Z_1(0)>0,\dots,
Z_{\lfloor c\ln m\rfloor-1}(0)>0,
Z_{\lfloor c\ln m\rfloor}\in m(\cD\setminus \overline{D_\d}),
\tau(Y^\ell)>\lfloor c\ln m\rfloor
\big)
\\\geq\,
P_{1}\big(Y^\ell_1>0,\dots,
Y^\ell_{\lfloor c\ln m\rfloor-1}>0,
Y^\ell_{\lfloor c\ln m\rfloor}>m(\rho^*_0-\d)
\big)\,.
\end{multline*}
As shown in lemma~7.7 of~\cite{CerfWF},
this last probability is bounded from below by
$1/m^{c\ln m}$,
which gives the desired result.
\end{proof}
We estimate next the length of a typical excursion of
$(Z_n)_{n\geq 0}$
outside a neighbourhood of $\lbrace\,z\in\dD:z_0=0\,\rbrace$ and $m\rho^*$.
For $\rho\in\cD$ and $\d>0$,
we define the $\d$-neighbourhood of $\rho$ by
$$U(\rho,\d)\,=\,\big\lbrace\,
r\in\cD:
|r-\rho|<\d
\,\big\rbrace\,.$$
\begin{lemma}\label{exc}
For all $\d>0$,
there exist $h\geq1$ and $c>0$,
depending on $\d$,
such that,
asymptotically,
for all $r\in\cD$ such that $r_0\geq \d$,
we have
$$P_{\lfloor mr\rfloor}
\big(
Z_1(0)>0,\dots,Z_{h-1}(0)>0,
Z_h\in mU(\rho^*,\d)
\big)
\,\geq\,
1-\exp(-cm)\,.$$
\end{lemma}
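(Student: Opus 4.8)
The plan is to run the Freidlin--Wentzell scheme: the chain $(Z_n)_{n\geq0}$ is a random perturbation of the dynamical system associated with $F$, so with probability exponentially close to $1$ it tracks the deterministic orbit of $F$ inside a thin tube, and by Proposition~\ref{convsd} that orbit reaches a prescribed neighbourhood of $\rho^*$ after a bounded number $h$ of steps, uniformly over starting points with $r_0\geq\d$. To make this precise I would first fix the deterministic data. Enlarging $\d$ only weakens the conclusion, so we may assume $\d\leq\rho^*_0$. The map $F_0(x)=\s\exa x/((\s-1)x+1)$ is increasing and concave with $F_0(0)=0$, $F_0'(0)=\s\exa>1$ and $F_0(\rho^*_0)=\rho^*_0$, hence $F_0(x)\geq\min(x,\rho^*_0)$ for $x\geq0$, and therefore $F^{j}(r)_0\geq\d$ for every $j\geq0$ whenever $r_0\geq\d$. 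By Proposition~\ref{convsd} the orbit $(F^{j}(r))_{j\geq0}$ converges to the stable fixed point $\rho^*$ as soon as $r_0>0$; let $\e>0$ be chosen (by stability of $\rho^*$) so that $|v-\rho^*|<\e$ forces $|F^{i}(v)-\rho^*|<\d/4$ for all $i\geq0$. By convergence, an orbit started with $r_0\geq\d$ enters $U(\rho^*,\e)$ at some time; by continuity of the relevant iterate it does so at a fixed time on a whole neighbourhood of the given $r$, and by the choice of $\e$ it then stays within $\d/4$ of $\rho^*$ forever after. Since $\{r\in\cD:r_0\geq\d\}$ is compact, a finite subcover produces an integer $h\geq1$, depending only on $\d$, with $|F^{h}(r)-\rho^*|<\d/4$ for all admissible $r$. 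Fix also a Lipschitz constant $L\geq1$ of $F$ on the compact set $\cD$.

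Next I would prove the stochastic ingredient: for every $\eta>0$ there is $c>0$ such that, asymptotically,
$$\sup_{z\in\dD}\,P_z\big(\,|Z_1-mF(z/m)|_1>\eta m\,\big)\,\leq\,\exp(-cm)\,.$$
Set $\eta_0=\eta/(3L)$ and cover $\cD$ by finitely many balls $U(\rho_1,\eta_0),\dots,U(\rho_p,\eta_0)$ with centres in $\cD$. If $z/m\in U(\rho_i,\eta_0)$ then $|F(z/m)-F(\rho_i)|_1<\eta/3$, so $\{|Z_1-mF(z/m)|_1>\eta m\}\subseteq\{Z_1\in mU'_i\}$, where $U'_i=\{t\in\cD:|t-F(\rho_i)|_1\geq 2\eta/3\}$. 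By the upper bound in Proposition~\ref{pgdtrans1}, the asymptotic exponential rate of $\sup_{z\in mU(\rho_i,\eta_0)}P_z(Z_1\in mU'_i)$ is at most $-\inf\{V_1(r,t):r\in\overline{U(\rho_i,\eta_0)},\,t\in\overline{U'_i}\}$. For $r\in\overline{U(\rho_i,\eta_0)}$ we have $|F(r)-F(\rho_i)|_1\leq\eta/3<2\eta/3$, so $F(r)\notin\overline{U'_i}$; since $V_1(r,t)=0$ precisely when $t=F(r)$ (which one reads off from the zero set of $I$ computed in the subsection on the perturbed dynamical system) and $V_1$ is lower semicontinuous, the infimum over the compact set $\overline{U(\rho_i,\eta_0)}\times\overline{U'_i}$ is some $\gamma_i>0$. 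Taking $c=\frac{1}{2}\min_i\gamma_i$ and $m$ large yields the estimate.

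Finally I would glue the two ingredients. Fix $r\in\cD$ with $r_0\geq\d$, put $r^{(m)}=\lfloor mr\rfloor/m$, choose $\eta>0$ with $\eta(1+L+\cdots+L^{h-1})<\d/4$, and let $c>0$ be the constant from the one-step estimate for this $\eta$. Set $T=\bigcap_{j=1}^{h}\{|Z_j-mF(Z_{j-1}/m)|_1\leq\eta m\}$. Under $P_{\lfloor mr\rfloor}$ we have $Z_0/m=r^{(m)}$, so on $T$ an induction on $j$ via the Lipschitz bound gives $|Z_j/m-F^{j}(r^{(m)})|_1\leq\eta(1+\cdots+L^{j-1})<\d/4$ for $1\leq j\leq h$. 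Since $F^{j}(r^{(m)})_0\geq\min(r^{(m)}_0,\rho^*_0)\geq\d-1/m$, this forces $Z_j(0)\geq1$ for $1\leq j\leq h$ and $m$ large, and moreover
$$|Z_h/m-\rho^*|\,\leq\,|Z_h/m-F^{h}(r^{(m)})|_1+L^{h}|r^{(m)}-r|_1+|F^{h}(r)-\rho^*|\,<\,\frac{\d}{4}+\frac{L^{h}(K+1)}{m}+\frac{\d}{4}\,<\,\d$$
for $m$ large, so $Z_h\in mU(\rho^*,\d)$. Hence $T$ is contained in the event of the lemma. Conversely $T^c\subseteq\bigcup_{j=1}^{h}\{|Z_j-mF(Z_{j-1}/m)|_1>\eta m\}$, and the Markov property together with the one-step estimate bounds the $j$-th term by $\exp(-cm)$, whence $P_{\lfloor mr\rfloor}(T^c)\leq h\exp(-cm)\leq\exp(-c'm)$ for any fixed $c'<c$ and $m$ large, which is the asserted bound (with $c'$ in place of $c$).

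The hard part is the two uniformity statements, which is exactly where the multidimensional setting forces the monotonicity arguments of~\cite{CerfWF} to be replaced by uniform estimates: producing one $h$ valid for every starting point with $r_0\geq\d$ (the iterates $F^{j}$ are not monotone in the coordinates $k\geq1$, so pointwise convergence to $\rho^*$ does not by itself give uniform convergence on the compact set, and one genuinely needs the stability of $\rho^*$), and producing a decay rate $c$ in the one-step estimate that is uniform over all of $\dD$ (which rests on the lower semicontinuity of $V_1$ and on the identification of its zero set with the graph of $F$). Once these are in hand, the concluding union bound is routine.
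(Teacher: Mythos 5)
Your proof is correct and close in spirit to the paper's, but you organize it differently, and the comparison is instructive. The paper builds a single "tube" argument: cover $\cK=\{r\in\cD:r_0\geq\d\}$ by finitely many balls $U(r_n,\d^{r_n}_0)$, propagate each ball along the orbit by choosing radii $\d^{r_n}_k$ with $F\big(U(F^{k-1}(r_n),\d^{r_n}_{k-1})\big)\subset U(F^k(r_n),\d^{r_n}_k/2)$, set $U_k=\bigcup_n U(F^k(r_n),\d^{r_n}_k)$, and then bound $P_{\lfloor mr\rfloor}(\exists k\leq h:\ Z_k\notin mU_k)$ by a telescoping sum and the one--step large deviations upper bound of Proposition~\ref{pgdtrans1}. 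You instead decouple the argument into two modular pieces: (i) a uniform one--step deviation estimate $\sup_z P_z\big(|Z_1-mF(z/m)|_1>\eta m\big)\leq e^{-cm}$, proved by a finite cover of $\cD$, the LD upper bound, and the identification of the zero set of $V_1$ with the graph of $F$; and (ii) a purely deterministic Lipschitz/Gronwall propagation along the orbit. Both routes end with the same union bound over $h$ steps; your version is arguably cleaner and more transparent, and the one--step estimate is reusable elsewhere. You also make explicit a point the paper leaves implicit: producing a single $h$ valid for all $r\in\cK$ requires not just pointwise convergence of $F^j(r)\to\rho^*$ (Proposition~\ref{convsd}) but also local asymptotic stability of $\rho^*$ (your $\e$-neighbourhood), without which the iterates could exit $U(\rho^*,\d/4)$ again after first entering it; the paper's construction of the radii $\d^{r_n}_k$ for $k>h_{r_n}$ silently relies on the same stability.

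One small overclaim to fix: the one--step estimate as stated with $\sup_{z\in\dD}$ is false at $z=z^\t_{\text{exit}}$ (where $Z_1=z^\t_{\text{enter}}$ deterministically, a macroscopic distance from $mF(z/m)$; e.g.\ for $\t=K+1$ the first coordinate jumps from $0$ to $m$). This is harmless for your gluing step, because when you decompose $T^c$ into "first failure at step $j$" events, the conditioning point $Z_{j-1}$ there satisfies $Z_{j-1}(0)\geq 1$ by the inductive Lipschitz bound. So you should restrict the supremum in the one--step estimate to $\{z\in\dD:z_0\geq1\}$, which is both what Proposition~\ref{pgdtrans1} actually covers (its proof uses the explicit formula for $p^\t(z,s,b,z')$, valid when $z_0,z'_0\geq1$) and all that your argument uses.
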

\begin{proof}
Let $\d>0$
and let us define the set 
$$\cK\,=\,\lbrace\,
r\in\cD\,:\, r_0\geq \d
\,\rbrace\,.$$
For each $r\in\cK$
there exists an integer $h_r\geq0$
such that $F^{h_r}(r)\in U(\rho^*,\d/4)$.
By continuity of the map $F$,
for each $r\in\cK$ there exist also positive numbers
$\d^r_0,\dots,\d^r_{h_r}$
such that 
$\d^r_0,\dots,\d^r_{h_r}<\d/2$
and
$$\forall k\in\lbrace\,0,\dots,h_{r}\,\rbrace\ \ 
F\big(
U(F^{k-1}(r),\d^r_{k-1})
\big)\,\subset\,
U(F^k(r),\d^r_{k}/2)\,.$$
The family $\lbrace\, U(r,\d^r_0)\,:\,r\in\cK \,\rbrace$
is an open cover of the set $\cK$;
since $\cK$ is a compact set,
we can extract a finite subcover, i.e.,
there exist $N\in\N$ and $r_1,\dots,r_N\in\cK$
such that 
$$\cK\,\subset\, U_0\,=\,\bigcup_{n=1}^N U(r_n,\d^{r_n}_0)\,.$$
Let us set $h=\max\lbrace\, h_{r_i}\,:\, 1\leq i\leq N\,\rbrace$,
For $n\in\lbrace\,1,\dots,N\,\rbrace$
we take $\d^{r_n}_{h_{r_n}+1},\dots,\d^{r_n}_{h}$
to be positive numbers such that, as before,
$$\forall k\in\lbrace\,h_{r_n}+1,\dots,h\,\rbrace\qquad
F\big(U(F^{k-1}(r_n),\d^{r_n}_{k-1})\big)\,\subset\, U(F^k(r_n),\d^{r_n}_k/2)\,.$$
Let us define
$$\forall k\in\lbrace\,1,\dots,h-1\,\rbrace\qquad
U_k\,=\,\bigcup_{n=1}^N U(F^k(r_n),\d^{r_n}_k)\,.$$
We have then, for any $r\in\cK$,
\begin{multline*}
P_{\lfloor mr\rfloor}\big(
Z_1(0)>0,\dots,Z_{h-1}(0)>0,
Z_h\in mU(\rho^*,\d)
\big)
\,\geq\\
P_{\lfloor mr\rfloor}\big(
\forall k\in \lbrace\,1,\dots,h\,\rbrace\quad
Z_k\in mU_k
\big)\,.
\end{multline*}
Passing to the complementary event,
\begin{multline*}
P_{\lfloor mr\rfloor}\big(
\exists k\in \lbrace\,1,\dots,h-1\,\rbrace\quad Z_k(0)=0
\ \text{ or }\ Z_h\not\in mU(\rho^*,\d)
\big)
\\\leq\,
P_{\lfloor mr\rfloor}\big(
\exists k\in \lbrace\,1,\dots,h\,\rbrace\quad
Z_k\not\in mU_k
\big)
\\\leq\,
\sum_{1\leq k\leq h}
P_{\lfloor mr\rfloor}\big(
Z_1\in mU_1,\dots,
Z_{k-1}\in mU_{k-1},
Z_k\not\in mU_k
\big)
\\\leq\,
\sum_{1\leq k\leq h}\
\sum_{z\in mU_{k-1}}
P_{\lfloor mr\rfloor}\big(
Z_{k-1}=z,
Z_k\not\in mU_k
\big)
\\\leq\,
\sum_{1\leq k\leq h}\
\sum_{z\in mU_{k-1}}
P_z\big(
Z_k\not\in mU_k
\big)
P_{\lfloor mr\rfloor}\big(
Z_{k-1}=z
\big)
\\\leq\,
\sum_{1\leq k\leq h}
\max\big\lbrace\,
P_z(Z_1\not\in mU_k):
z\in mU_{k-1}
\,\big\rbrace
\,.
\end{multline*}
We use now
the large deviations upper bound stated in proposition~\ref{pgdtrans1}.
We have, for all
$k\in\lbrace\,1,\dots,h\,\rbrace$,
\begin{multline*}
\limsup_{\genfrac{}{}{0pt}{1}{\ell,m\to\infty,\,q\to 0}{{\ell q} \to a}}
\frac{1}{m}\ln\max_{z\in mU_{k-1}}P_z\big(
Z_1\not\in mU_k
\big)
\,\leq\\
-\inf\big\lbrace\,
I(r,\xi,\b,t):
r\in \overline{U_{k-1}},\,
\xi\in\cD,\,
\b\in\cB(t),\,
t\not\in U_k
\,\big\rbrace\,.
\end{multline*}
For all 
$r\in \overline{U_{k-1}}$,
we have
$F(r)\in U_k$,
the previous infimum is thus strictly positive.
Since $h$ is fixed,
we conclude that
$$\limsup_{\genfrac{}{}{0pt}{1}{\ell,m\to\infty,\,q\to 0}{{\ell q} \to a}}
\frac{1}{m}\ln P_{\lfloor mr\rfloor}\big(
\exists k\in \lbrace 1,\dots,h-1\rbrace\ \ Z_k=0
\ \text{ or }\ 
Z_h\not\in mU(\rho^*,\d)
\big)
<0\,,$$
which finishes the proof of the lemma.
\end{proof}
\begin{corollary}\label{lexc}
Let $\d>0$.
There exist $h\geq 1$,
$c\geq 0$,
depending on $\d$,
such that,
asymptotically,
for all
$r\in\cD\setminus(\overline{D}_\d\cup U(\rho^*,\d))$
and for all $n\geq 0$,
we have
$$
P_{\lfloor mr\rfloor}\big(
Z_t\in\cD\setminus(\overline{D}_\d\cup U(\rho^*,\d))
\ \text{ for }\
0\leq t\leq n
\big)
\,\leq\,
\exp\Big(
-cm\Big\lfloor\frac{n}{h}\Big\rfloor
\Big)\,.
$$
\end{corollary}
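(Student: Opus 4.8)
The plan is to iterate Lemma~\ref{exc} in the contrapositive, blocking time into windows of length $h$. Fix $\d>0$ and let $h\geq 1$, $c>0$ be the constants supplied by Lemma~\ref{exc} for this $\d$. First I would record the key consequence of Lemma~\ref{exc} in a form that does not mention $U(\rho^*,\d)$ explicitly: if $r\in\cD$ with $r_0\geq\d$, then, asymptotically,
$$
P_{\lfloor mr\rfloor}\big(
Z_t\in\cD\setminus(\overline{D}_\d\cup U(\rho^*,\d))\ \text{ for }\ 0\leq t\leq h
\big)\,\leq\,\exp(-cm)\,,
$$
because any trajectory staying out of $\overline{D}_\d\cup U(\rho^*,\d)$ up to time $h$ in particular has $Z_1(0)>0,\dots,Z_{h-1}(0)>0$ and $Z_h\notin mU(\rho^*,\d)$, which is the complement of the event in Lemma~\ref{exc}.

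Next I would apply the Markov property at the times $0,h,2h,\dots$. Write $n=qh+s$ with $0\leq s<h$, so $q=\lfloor n/h\rfloor$. On the event that $Z_t\in\cD\setminus(\overline{D}_\d\cup U(\rho^*,\d))$ for all $0\leq t\leq n$, at each of the times $jh$ for $0\leq j<q$ the chain sits at some state $z^{(j)}$ with $z^{(j)}/m\in\cD\setminus(\overline{D}_\d\cup U(\rho^*,\d))$; in particular $z^{(j)}_0/m\geq\d$ (using that $\overline{D}_\d=\{r_0\leq\d\}$, so being outside it forces $r_0\geq\d$, which is exactly the hypothesis $r_0\geq\d$ of Lemma~\ref{exc} — here one should be slightly careful about the boundary $r_0=\d$ versus $r_0>\d$, but replacing $\d$ by a slightly larger constant in the application of Lemma~\ref{exc} and a slightly smaller one in the statement being proved absorbs this without changing the conclusion, since $c$ may be shrunk). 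Conditioning successively, the probability of surviving the window $[jh,(j+1)h]$ inside $\cD\setminus(\overline{D}_\d\cup U(\rho^*,\d))$ is, by the displayed bound above applied with $r=z^{(j)}/m$, at most $\exp(-cm)$, uniformly over the starting state of the window. Multiplying over the $q$ disjoint windows gives
$$
P_{\lfloor mr\rfloor}\big(
Z_t\in\cD\setminus(\overline{D}_\d\cup U(\rho^*,\d))\ \text{ for }\ 0\leq t\leq n
\big)\,\leq\,\exp(-cmq)\,=\,\exp\Big(-cm\Big\lfloor\frac{n}{h}\Big\rfloor\Big)\,,
$$
which is the desired inequality (allowing $c=0$ in the statement covers the trivial case, and the leftover $s<h$ steps are simply discarded).

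The only real point requiring care is the uniformity: Lemma~\ref{exc} is stated for a deterministic start $\lfloor mr\rfloor$ with $r_0\geq\d$, whereas the intermediate states $z^{(j)}$ reached after $jh$ steps are random and need not be exactly of the form $\lfloor mr\rfloor$. This is handled by noting that the bound in Lemma~\ref{exc} is in fact a bound on $\max\{P_z(\cdots):z_0\geq \lfloor m\d\rfloor\}$ — the proof of Lemma~\ref{exc} goes through the large deviations upper bound of Proposition~\ref{pgdtrans1}, which is uniform over starting states in $mU_{k-1}$, so one genuinely gets a bound uniform over all integer starting states $z$ with $z_0/m\geq\d$. With that observation the Markov-property iteration above is rigorous. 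I expect this uniformity bookkeeping, together with the harmless adjustment of the constant $\d$ at the boundary $r_0=\d$, to be the main (and only) obstacle; the rest is a routine product estimate.
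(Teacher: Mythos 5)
Your proof is correct and follows exactly the route the paper indicates: partition $\{0,\dots,n\}$ into $\lfloor n/h\rfloor$ blocks of length $h$, note that surviving a block inside $\cD\setminus(\overline{D}_\d\cup U(\rho^*,\d))$ is contained in the complement of the event of Lemma~\ref{exc} (whose hypothesis $r_0\geq\d$ is automatically met since leaving $\overline{D}_\d$ forces $r_0>\d$, and any integer state $z$ with $z_0\geq\d m$ is of the form $\lfloor m(z/m)\rfloor$, so the uniformity you worry about is immediate), and iterate via the Markov property. This is precisely the argument of corollary~7.10 of the cited reference that the paper invokes without detail.
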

The proof is carried out by dividing the interval $\lbrace\, 0,\dots,n\,\rbrace$
in subintervals of length $h$ 
and using the estimate of lemma~\ref{exc} on each of the subintervals.
We will not write the details, 
which can be found in the proof of corollary~7.10 of~\cite{CerfWF}.

\begin{proposition}\label{convchm}
Let 
$g:[0,1]\to[0,1]$
be an increasing and continuous function, 
such that $g(0)=0$.
For all $z^0\in\dD$ such that $z^0_0\geq 1$,
we have
$$\lim_{\genfrac{}{}{0pt}{1}{\ell,m\to\infty,\,q\to 0}{{\ell q} \to a}}
\frac{\displaystyle E\bigg(
\sum_{n=0}^{\tau_0-1}
g\bigg(
\frac{|Z_n|_1}{m}
\,\bigg|\,
Z_0=z^0
\bigg)
\bigg)}{E(\tau_0
\,|\,
Z_0=z^0)}
\,=\,
g(|\rho^*|_1)\,.$$
\end{proposition}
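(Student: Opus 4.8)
The plan is to show that, before the hitting time $\tau_0$ of $\{\,z:z_0=0\,\}$, the chain $(Z_n)$ spends asymptotically all of its time in a small neighbourhood $U(\rho^*,\d)$ of the stable fixed point $\rho^*$ (we are in the case $\s\exa>1$, as throughout this section), where $g\big(|Z_n|_1/m\big)$ is within a $\d$-controlled error of $g(|\rho^*|_1)$. First I fix $\d\in\,]0,\rho^*_0[\,$ small enough that $F\big(\overline{U(\rho^*,\d)}\big)\subset U(\rho^*,\d)$; this is possible since $\rho^*$ is an attracting fixed point of $F$, the Jacobian $DF(\rho^*)$ being lower triangular with diagonal entries $1/(\s\exa),1/\s,\dots,1/\s$, all of modulus strictly less than $1$ (proposition~\ref{convsd}); note that then $Z_n(0)\geq1$ whenever $Z_n\in mU(\rho^*,\d)$. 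Writing $B=\card\{\,0\leq n<\tau_0:Z_n\notin mU(\rho^*,\d)\,\}$ and splitting the sum according to whether $Z_n\in mU(\rho^*,\d)$ (terms that contribute at most $\omega(\d)$ each, by uniform continuity of $g$) or not (terms bounded by $1$ each, since $g$ has values in $[0,1]$), one gets
$$\Big|\sum_{n=0}^{\tau_0-1}g\big(|Z_n|_1/m\big)-g(|\rho^*|_1)\,\tau_0\Big|\,\leq\,\omega(\d)\,\tau_0+B\,,$$
for a nonnegative $\omega(\d)$, depending on $g$ and $K$, with $\omega(\d)\to0$ as $\d\to0$. Taking the expectation under $E_{z^0}$, dividing by $E_{z^0}(\tau_0)$, and letting $\d\to0$ at the very end, the statement reduces to proving that $E_{z^0}(B)/E_{z^0}(\tau_0)\to0$ in the asymptotic regime, for each fixed such $\d$.

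The argument then rests on two estimates. The first is that an excursion of $(Z_n)$ outside $mU(\rho^*,\d)$ is short: from any state with $z_0\geq1$, lemma~\ref{logsing} gives probability at least $m^{-c\ln m}$ that in $\lfloor c\ln m\rfloor$ steps, keeping the master class non-empty, the chain reaches a state with first coordinate $>\d m$, whence lemma~\ref{exc} gives conditional probability at least $1-\exp(-cm)$ that $mU(\rho^*,\d)$ is reached within a further bounded number $h$ of steps, again keeping the master class non-empty. Composing, and iterating by the Markov property, the length of any excursion is stochastically dominated by $\big(\lfloor c\ln m\rfloor+h\big)$ times a geometric variable of parameter $\tfrac12 m^{-c\ln m}$; hence its conditional expected length, uniformly in the starting state, is at most $\ell_m=\exp\big(O((\ln m)^2)\big)$. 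The second estimate is that a sojourn of $(Z_n)$ inside $mU(\rho^*,\d)$ is long: since $V_1(r,\cdot)$ vanishes only at $F(r)$ (from the determination of the zeros of $I$) and $F\big(\overline{U(\rho^*,\d)}\big)\subset U(\rho^*,\d)$, compactness and lower semicontinuity give $\inf\{\,V_1(r,t):r\in\overline{U(\rho^*,\d)},\ t\notin U(\rho^*,\d)\,\}=2c_*>0$, so by the large deviations upper bound of proposition~\ref{pgdtrans1} the one-step probability of leaving $mU(\rho^*,\d)$ is at most $\exp(-c_*m)$ asymptotically, uniformly over $mU(\rho^*,\d)$; consequently every sojourn has conditional expected length at least $g_m=\tfrac12\exp(c_*m/2)$, uniformly in the starting state.

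These estimates are assembled as follows. On $[0,\tau_0[\,$ the excursions and sojourns alternate, so their numbers $N_{\mathrm e}$ and $N_{\mathrm s}$ satisfy $N_{\mathrm e}\leq N_{\mathrm s}+1$. The start of the $j$-th excursion (resp.\ sojourn) is a stopping time on the event that it exists, so the strong Markov property together with the two estimates yields $E_{z^0}(B)\leq\ell_m\,E_{z^0}(N_{\mathrm e})$ and $E_{z^0}(\tau_0)\geq g_m\,E_{z^0}(N_{\mathrm s})\geq g_m\big(E_{z^0}(N_{\mathrm e})-1\big)$, whence
$$\frac{E_{z^0}(B)}{E_{z^0}(\tau_0)}\,\leq\,\frac{\ell_m}{g_m}+\frac{\ell_m}{E_{z^0}(\tau_0)}\,.$$
Finally $E_{z^0}(\tau_0)$ is at least exponentially large in $m$: by the coupling of section~\ref{Compms} one has $Z_n(0)\geq Y^\ell_n$ as long as $Y^\ell_n>0$, so $\tau_0\geq\tau(Y^\ell)$, and proposition~\ref{pers} gives $\tfrac1m\ln E_{z^0}(\tau_0)\geq\tfrac1m\ln E\big(\tau(Y^\ell)\,\big|\,Y^\ell_0=z^0_0\big)\to\widetilde V(\rho^*_0,0)$, which is positive by lemma~\ref{costms}. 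Since $\ell_m=\exp(O((\ln m)^2))$ while $g_m$ and $E_{z^0}(\tau_0)$ grow at least like $\exp(\mathrm{const}\cdot m)$, the right-hand side above tends to $0$, which finishes the proof.

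The main difficulty will be the clean handling of the two separated time scales — polylogarithmically short excursions versus exponentially long sojourns — and the bookkeeping converting them into the bound on $E_{z^0}(B)/E_{z^0}(\tau_0)$; this is exactly where the inputs imported from~\cite{CerfWF} (lemmas~\ref{logsing} and~\ref{exc}, proposition~\ref{pers}) and the attracting character of $\rho^*$ come into play. A secondary technical point to watch is that the forward-invariance $F\big(\overline{U(\rho^*,\d)}\big)\subset U(\rho^*,\d)$ should be read with respect to a norm adapted to the (triangular, sub-unit-spectral-radius) Jacobian of $F$ at $\rho^*$, after which it passes to the Euclidean ball at the cost of shrinking $\d$.
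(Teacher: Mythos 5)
Your argument is correct in overall structure but takes a genuinely different route from the paper's. The paper also works with excursions away from a neighbourhood of $\rho^*$, but its bookkeeping is heavier: it defines stopping times $T_0=0,T^*_1,T_1,T^*_2,\dots$ between exits from $U(\rho^*,2\d)$ and returns to $U(\rho^*,\d)$, truncates at $t^\eta_m=\exp\big(m(\widetilde V(\rho^*_0,0)+\eta)\big)$ using a tail estimate for $\tau_0$ imported from~\cite{CerfWF}, and then controls $E\big(1_{\tau_0\leq t^\eta_m}\sum_k(T^*_k\wedge\tau_0-T_{k-1})\big)$ via Cauchy--Schwarz, second moments of excursion lengths (corollary~\ref{loco}), and a Poisson-type tail bound on the number of excursions (lemma~\ref{nexc}); finally it divides by $E(\tau_0)$ using proposition~\ref{pers}. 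You dispense with Cauchy--Schwarz, second moments, the truncation $t^\eta_m$, and lemma~\ref{nexc} altogether, replacing them by the cleaner first-moment comparison: excursions have expected length at most $\ell_m=\exp(O((\ln m)^2))$ (your assembly of lemma~\ref{logsing}, lemma~\ref{exc}, and the Markov property reproduces corollary~\ref{loco} at the level of first moments), sojourns in $mU(\rho^*,\d)$ have expected length at least $g_m=\exp(\Theta(m))$ (a consequence of proposition~\ref{pgdtrans1} you add and the paper does not state), and they alternate, giving $E(B)/E(\tau_0)\leq\ell_m/g_m+\ell_m/E(\tau_0)$. This buys a shorter and more transparent argument at the price of needing the new sojourn estimate, and it isolates exactly which ingredient makes the time scales separate.

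The one genuine weak point is the forward-invariance $F\big(\overline{U(\rho^*,\d)}\big)\subset U(\rho^*,\d)$ that underlies your sojourn estimate, and your closing remark does not quite repair it. Because $DF(\rho^*)$ is lower triangular with possibly large sub-diagonal entries, its Euclidean operator norm can exceed $1$ even though its spectral radius is $<1$; in that case no Euclidean ball $U(\rho^*,\d)$, however small, is mapped into itself by the linearization, so ``shrinking $\d$'' does not recover the claim. You need either to (a) carry out the whole sojourn estimate in a ball $B_*(\rho^*,\d)$ for a norm $\|\cdot\|_*$ adapted to $DF(\rho^*)$, sandwiching $U(\rho^*,c_1\d)\subset B_*(\rho^*,\d)\subset U(\rho^*,c_2\d)$ so that lemmas~\ref{logsing} and~\ref{exc} (stated for Euclidean balls) still feed in correctly, or (b) invoke the $l$--step rate function $V_l$ from corollary~\ref{pgdtransl} for $l$ large enough that $F^l\big(\overline{U(\rho^*,\d)}\big)\subset U(\rho^*,\d/2)$ (which does follow from proposition~\ref{convsd} by compactness and uniform convergence), and reformulate the sojourn bound for the $l$--step chain. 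Both fixes are routine and do not change the architecture of your proof, but as written the parenthetical justification is insufficient.
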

\begin{proof}
Let 
$\e>0$.
The function 
$g$
being continuous,
there exists
$\d>0$
such that
$$\forall \rho\in U(\rho^*,2\d)\qquad
\big|
g(|\rho|_1)-g(|\rho^*|_1)
\big|\,<\,\e\,.$$
We define next a sequence of stopping times
in order to control the excursions of the Markov chain
$(Z_n)_{n\geq 0}$
outside
$U(\rho^*,\d)$.
We take
$T_0=0$
and
\begin{align*}
T^*_1&=\inf\bigg\lbrace
n\geq0:
\frac{Z_n}{m}\in U(\rho^*,\d)
\bigg\rbrace\quad
&&T_1=\inf\bigg\lbrace
n\geq T^*_1:
\frac{Z_n}{m}\not\in U(\rho^*,2\d)
\bigg\rbrace\\
&\ \, \vdots &&\ \ \quad\vdots\\
T^*_k&=\inf\bigg\lbrace
n\geq T_{k-1}:
\frac{Z_n}{m}\in U(\rho^*,\d)
\bigg\rbrace\ 
&&T_k=\inf\bigg\lbrace
n\geq T^*_k:
\frac{Z_n}{m}\not\in U(\rho^*,2\d)
\bigg\rbrace\\
&\ \, \vdots &&\ \ \quad\vdots
\end{align*}
We have then
\begin{multline*}
\sum_{n=0}^{\tau_0-1}
g\bigg(
\frac{|Z_n|_1}{m}
\bigg)
-g(|\rho^*|_1)\tau_0
\,=\,
\sum_{k\geq 1}\
\sum_{n=T_{k-1}\wedge\tau_0}^{T^*_k\wedge\tau_0-1}
\bigg(
g\bigg(
\frac{|Z_n|_1}{m}
\bigg)
-g(|\rho^*|_1)
\bigg)
\\+\,
\sum_{k\geq 1}\
\sum_{n=T^*_k\wedge\tau_0}^{T_k\wedge\tau_0-1}
\bigg(
g\bigg(
\frac{|Z_n|_1}{m}
\bigg)
-g(|\rho^*|_1)
\bigg)\,.\hfil
\end{multline*}
Taking the absolute value, we obtain
$$
\bigg|
\sum_{n=0}^{\tau_0-1}
g\Big(
\frac{|Z_n|_1}{m}
\Big)
-g(|\rho^*|_1)\tau_0
\bigg|\,\leq\\
2g(1)\sum_{k\geq1}(T^*_k\wedge\tau_0-T_{k-1}\wedge\tau_0)
+\e\tau_0\,.
$$
We need to control the sum on the right hand side.
Let us define, for
$n\geq 0$,
$$N(n)\,=\,\max\big\lbrace\,
k\geq1:
T_{k-1}<n
\,\big\rbrace\,.$$
We can now rewrite the sum as follows:
$$\sum_{k\geq1}(T^*_k\wedge\tau_0-T_{k-1}\wedge\tau_0)\,=\,
\sum_{k=1}^{N(\tau_0)}(T^*_k\wedge\tau_0-T_{k-1})\,.$$
Let 
$\eta>0$
and let us take 
$t^\eta_m$
as in proposition~7.11 of~\cite{CerfWF}:
$$t^\eta_m\,=\,
\exp\big(m(\widetilde{V}(\rho^*_0,0)+\eta)\big)\,,$$
where $\widetilde{V}$ 
is the cost function governing the dynamics of the master sequence,
as defined in section~\ref{Compms}.
We decompose the previous sum as follows:
$$
\sum_{k=1}^{N(\tau_0)}(T^*_k\wedge\tau_0-T_{k-1})\,\leq\\
1_{\tau_0>t^\eta_m}\tau_0
+1_{\tau_0\leq t^\eta_m}\sum_{k=1}^{N(\tau_0)}(T^*_k\wedge\tau_0-T_{k-1})\,.
$$
Let 
$z^0\in\dD$
such that $z^0_0\geq 1$.
Since the estimates are the same for every starting point,
we do not write the starting point in what follows:
the probabilities and expectation are all taken with respect to the initial condition
${Z_0=z^0}$,
unless otherwise stated.
We take the expectation in the previous inequalities and we obtain
\begin{multline*}
\bigg|
E\bigg(
\sum_{n=0}^{\tau_0-1}
g\Big(
\frac{|Z_n|_1}{m}
\Big)
\bigg)
-g(|\rho^*|_1)E(\tau_0)
\bigg|
\,\leq\\
2g(1)E(1_{\tau_0>t^\eta_m}\tau_0)
+2g(1)
E\bigg(
1_{\tau_0\leq t^\eta_m}
\sum_{k=1}^{N(\tau_0)}
(T^*_k\wedge\tau_0-T_{k-1})
\bigg)+\e E(\tau_0)
\,.
\end{multline*}
Thanks to the estimates developed in section~7.3 of~\cite{CerfWF},
we know that
$$\lim_{m\to\infty}
E(1_{\tau_0>t^\eta_m}\tau_0)
\,=\,0\,.$$
Proceeding as in lemma~7.13 of~\cite{CerfWF},
we can obtain the following bound on $N$:
\begin{lemma}\label{nexc}
There exists 
$c>0$,
depending on
$\d$,
such that, asymptotically,
$$
\forall k,n\geq0\qquad
P(N(n)>k)\,\leq\,
\frac{n^k}{k!}\exp(-cmk)\,.
$$
\end{lemma}
We estimate next the term
$$E\Bigg(
1_{\tau_0\leq t^\eta_m}
\sum_{k=1}^{N(\tau_0)}
(T^*_k\wedge\tau_0-T_{k-1})
\Bigg)\,.$$
We have, by the Cauchy--Schwarz inequality,
\begin{multline*}
E\Bigg(
1_{\tau_0\leq t^\eta_m}
\sum_{k=1}^{N(\tau_0)}
(T^*_k\wedge\tau_0-T_{k-1})
\Bigg)
\,=\,
\sum_{k\geq 1}
E\big(
1_{\tau_0\leq t^\eta_m}
1_{k\leq N(\tau_0)}
(T^*_k\wedge\tau_0-T_{k-1})
\big)
\\\leq\,
\sum_{k\geq 1}
P\big(
\tau_0\leq t^\eta_m,\,
N(\tau_0)\geq k
\big)^{1/2}
E\big(
1_{k\leq N(\tau_0)}
(T^*_k\wedge\tau_0-T_{k-1})^2
\big)^{1/2}
\\\leq\,
\sum_{k\geq 1}
P\big(
N(t^\eta_m)\geq k
\big)^{1/2}
E\big(
1_{k\leq N(\tau_0)}
(T^*_k\wedge\tau_0-T_{k-1})^2
\big)^{1/2}\,.
\end{multline*}
If 
$1\leq k\leq N(\tau_0)$,
then
$T_{k-1}<\tau_0$
and
$Z_{T_{k-1}}(0)>0$.
Thanks to the Markov property,
\begin{multline*}
E\big(
1_{k\leq N(\tau_0)}
(T^*_k\wedge\tau_0-T_{k-1})^2
\big)
\\=\,
\sum_{z\in \dD:\, z_0\geq 1}
E\big(
1_{k\leq N(\tau_0)}
(T^*_k\wedge\tau_0-T_{k-1})^2
\,\big|\,
Z_{T_{k-1}}=z
\big)
\times
P\big(
Z_{T_{k-1}}=z
\big)
\\\leq\,
\sum_{z\in \dD:\, z_0\geq 1}
E_z\big(
(T^*_1\wedge\tau_0)^2
\big)
P\big(
Z_{T_{k-1}}=z
\big)\,.
\end{multline*}
We next seek an upper bound on the random time
$T^*_1\wedge\tau_0$,
whenever the Markov chain starts form 
$z\in\dD$
with $z_0\geq 1$.

\begin{lemma}\label{longexc}
For all $\d>0$,
there exist $h\geq 1$, $c>0$,
depending on $\d$,
such that,
asymptotically,
for $z\in\dD$ such that $z_0\geq 1$,
$$P_z\big(
Z_{\lfloor c\ln m\rfloor+h}\in mU(\rho^*,\d)
\big)
\,\geq\,
\frac{1}{2m^{c\ln m}}\,.$$
\end{lemma}
\begin{proof}
Thanks to lemma~\ref{logsing},
there exists $c>0$,
such that,
asymptotically,
for all
$z\in\dD$
such that $z_0\geq 1$,
$$P_z\big(
Z_{\lfloor c\ln m\rfloor}(0)>\d m 
\big)
\,\geq\,
\frac{1}{m^{c\ln m}}\,.$$
Likewise,
thanks to lemma~\ref{exc},
there exist
$h\geq 1$ and $c'>0$,
such that,
asymptotically,
for all $z'\in\dD$
such that $z'_0\geq\d m$,
$$P_{z'}\big(
Z_h\in mU(\rho^*,\d)
\big)
\,\geq\,
1-\exp(-c'm)\,.$$
Thus,
\begin{multline*}
P_z\big(
Z_{\lfloor c\ln m\rfloor+h}\in mU(\rho^*,\d)
\big)
\\\geq\,
P_z\big(
Z_{\lfloor c\ln m\rfloor}(0)\geq \d m,\,
Z_{\lfloor c\ln m\rfloor+h}\in mU(\rho^*,\d)
\big)
\,=\\
\sum_{z':z'_0\geq \d m}
P_z\big(
Z_{\lfloor c\ln m \rfloor}=z'
\big)
P_{z'}\big(
Z_h\in mU(\rho^*,\d)
\big)
\,\geq\,
\frac{1}{m^{c\ln m}}
\big(
1-\exp(-c'm)
\big)\,,
\end{multline*}
and the result holds.
\end{proof}
\begin{corollary}\label{loco}
For all
$\d>0$,
there exist
$h\geq 1$, $c>0$,
depending on $\d$,
such that,
asymptotically,
for all
$z\in\dD$
such that $z_0\geq 1$,
$$\forall n\geq 0\qquad
P_z\big(
T^*_1\wedge\tau_0\geq
n(\lfloor c\ln m\rfloor+h)
\big)
\,\leq\,
\bigg(
1-\frac{1}{2m^{c\ln m}}
\bigg)^n\,.$$
\end{corollary}
Again, the proof is done by dividing the interval
$\lbrace\,0,\dots,n(\lfloor c\ln m\rfloor+h)\,\rbrace$
in subintervals of length $\lfloor c\ln m\rfloor+h$
and using the estimates of lemma~\ref{longexc} on each subinterval,
as in corollary~7.10 of~\cite{CerfWF}.
Thanks to corollary~\ref{loco},
asymptotically,
for all $z\in\dD_K$
such that $z_0\geq 1$,
$$E_z\big(
(T^*_1\wedge\tau_0)^2
\big)
\,=\,
\sum_{k\geq 1}
P_z\big(
T^*_1\wedge\tau_0\geq \sqrt{k}
\big)
\,\leq\,
\sum_{k\geq 1}\bigg(
1-\frac{1}{2m^{c\ln m}}
\bigg)^{\Big\lfloor \textstyle\frac{\sqrt{k}}{\lfloor c\ln m\rfloor+h} \Big\rfloor}\,.$$
Let us set 
$$\a=1-\frac{1}{2m^{c\ln m}}\,,\qquad
t=\lfloor c\ln m\rfloor+h\,.$$
We have:
$$
\sum_{k\geq 1}\a^{\lfloor \sqrt{k}/t\rfloor}
\,\leq\,
\sum_{k\geq 1}\a^{\sqrt{k}/t-1}
\,\leq\,
\int_0^\infty \a^{\sqrt{x}/t-1}\,dx
\,=\,
\frac{2t^2}{\a(\ln\a)^2}\,.
$$
Therefore,
asymptotically,
for all $z\in\dD_K$
such that $z_0\geq 1$,
$$E_z\big(
(T^*_1\wedge\tau_0)^2
\big)
\,\leq\,
m^{3c\ln m}\,.$$
Thus, for all $k\geq 1$,
$$E\big(
1_{k\leq N(\tau_0)}
(T^*_k\wedge\tau_0-T_{k-1})^2
\big)
\,\leq\,
m^{3c\ln m}\,.$$
Together with lemma~\ref{nexc}, this implies that
\begin{multline*}
E\Bigg(
1_{\tau_0\leq t^\eta_m}
\sum_{k=0}^{N(\tau_0)}
\big(T^*_k\wedge\tau_0-T_{k-1}\big)
\Bigg)
\,\leq\,
\sum_{k\geq 1}
P\big(
N(t^\eta_m)>k
\big)^{1/2}
\big( m^{3c\ln m} \big)^{1/2}
\\\leq\,
m^{3c\ln m}
\Bigg(
t^\eta_m\exp(-cm/3)
+\sum_{k\geq t^\eta_m\exp(-cm/3)}
\bigg(
\frac{(t^\eta_m)^k}{k!}\exp(-cmk)
\bigg)^{1/2}
\Bigg)
\\\leq\,
m^{3c\ln m}
\Bigg(
t^\eta_m\exp(-cm/3)
+\sum_{k\geq0}\exp\Big(
\frac{k}{2}-cm\frac{k}{3}
\Big)
\Bigg)\,.
\end{multline*}
The last inequality holds since
$k!\geq (k/e)^k$.
We choose $\eta$
such that $0<\eta<c/3$.
Thanks to the preceding inequality,
\begin{multline*}
\limsup_{\genfrac{}{}{0pt}{1pt}{\ell,m\to\infty,\,q\to0}{\ell q\to a}}
\frac{1}{m}\ln E\Bigg(
1_{\tau_0\leq t^\eta_m}
\sum_{k=1}^{N(\tau_0)}\big(
T^*_k\wedge\tau_0-T_{k-1}
\big)
\Bigg)
\\\leq\,
\max\Big(
\widetilde{V}(\rho^*_0,0)+\eta-\frac{c}{3}
\Big)
\,<\,
\widetilde{V}(\rho^*_0,0)\,.
\end{multline*}
Theses estimates,
along with the result of proposition~\ref{pers},
imply that
$$\Bigg|
E\bigg(
\sum_{n=0}^{\tau_0-1}
g\bigg(
\frac{|Z_n|_1}{m}
\bigg)
\bigg)
-g(|\rho^*|_1)E(\tau_0)
\Bigg|\,\leq\,
3\e E(\tau_0)\,,$$
which concludes the proof of proposition~\ref{convchm}.
\end{proof}

\section{Synthesis}
The first statement of theorem~\ref{main}
is proved in~\cite{CerfWF}
for the case of the master sequence, $K=0$.
The proof for the case $K\geq 1$
does not involve any new arguments or ideas 
for a better understanding of the model;
it is a straightforward generalisation of the proof
for the case $K=0$.
Thus we deal only with the second statement of theorem~\ref{main}.
Let us suppose that 
$\a\psi(a)>\ln\k$.
As shown in~\cite{CerfWF},
the following estimates hold:
$\forall a\in\,]0,+\infty[\,$,
$\forall \a\in[0,+\infty]$,
\begin{align*}
\lim_{\genfrac{}{}{0pt}{1}{\ell,m\to\infty,\,q\to 0}{{\ell q} \to a,\,\frac{m}{\ell}\to\a}}
\frac{1}{m}\ln E(\tau_0\,|\,Z^\t_0=z^\t_{\text{enter}})\,&=\,\widetilde{V}(\rho^*,0)\,,\\
\limsup_{\genfrac{}{}{0pt}{1}{\ell,m\to\infty,\,q\to 0}{{\ell q} \to a,\,\frac{m}{\ell}\to\a}}
\frac{1}{\ell}\ln E(\tau^*\,|\, O^\t_0=o^\t_{\text{exit}})\,&\leq\,\ln\k\,.
\end{align*}
Thus,
since we are studying the case $\a\psi(a)>\ln\k$,
$$\lim_{\genfrac{}{}{0pt}{1}{\ell,m\to\infty,\,q\to 0}{{\ell q} \to a,\,\frac{m}{\ell}\to\a}}
\frac{E(\tau_0\,|\,Z^\t_0=z^\t_{\text{enter}})}
{E(\tau^*\,|\, O^\t_0=o^\t_{\text{exit}})}\,=\,+\infty\,.$$
On one hand, $g$ being a bounded function, the above identity readily implies that
$$\lim_{\genfrac{}{}{0pt}{1}{\ell,m\to\infty,\,q\to 0}{{\ell q} \to a,\,\frac{m}{\ell}\to\a}}
\frac{\displaystyle 
E\Bigg(
\sum_{n=0}^{\tau^*-1}
g\bigg(
\frac{|\pi(O^\t_n)|_1}{m}
\bigg)
\,\Bigg|\,
O^\t_0=\otex
\Bigg)}{E(\tau^*\,|\, O^\t_0=\otex)
+E(\tau_0\,|\, Z^\t_0=z^\t_{\text{enter}})}\,=\,0\,.
$$
On the other hand, using proposition~\ref{convchm}, we see that
$$
\lim_{\genfrac{}{}{0pt}{1}{\ell,m\to\infty,\,q\to 0}{{\ell q} \to a,\,\frac{m}{\ell}\to\a}}
\frac{\displaystyle
E\Bigg(
\sum_{n=0}^{\tau_0-1}
g\bigg(
\frac{|Z^\t_n|_1}{m}
\bigg)
\,\Bigg|\,
Z^\t_0=z^\t_{\text{enter}}
\Bigg)}{E(\tau^*\,|\, O^\t_0=\otex)
+E(\tau_0\,|\, Z^\t_0=z^\t_{\text{enter}})}\,=\,g(\rho^*_0+\cdots+\rho^*_K)\,.
$$
Reporting back in the formula at the very end of section~\ref{Invmes},
we conclude the proof of theorem~\ref{main}.

\bibliographystyle{plain}
\bibliography{wfart}
\end{document}